\definecolor{webgreen}{rgb}{0,.5,0}
\definecolor{webbrown}{rgb}{.6,0,0}
\DeclareMathOperator{\ham}{ham}
\def\mycommand{\setlength{\abovedisplayskip}{8pt}%
\setlength{\belowdisplayskip}{8pt}%
\setlength{\abovedisplayshortskip}{8pt}%
\setlength{\belowdisplayshortskip}{8pt}}
\let\oldselectfont\selectfont
\def\selectfont{\oldselectfont\mycommand}
\begin{document}

\theoremstyle{plain}
\newtheorem{theorem}{Theorem}
\newtheorem{corollary}[theorem]{Corollary}
\newtheorem{lemma}[theorem]{Lemma}
\newtheorem{proposition}[theorem]{Proposition}

\theoremstyle{definition}
\newtheorem{definition}[theorem]{Definition}
\newtheorem{example}[theorem]{Example}
\newtheorem{conjecture}[theorem]{Conjecture}

\theoremstyle{remark}
\newtheorem{remark}[theorem]{Remark}

\author{
Daniel Gabric\\
School of Computer Science\\
University of Waterloo\\
Waterloo, Ontario N2L 3G1\\
Canada\\
\href{mailto:dgabric@uwaterloo.ca}{\tt dgabric@uwaterloo.ca}
}

\title{Words that almost commute}

\maketitle

\begin{abstract}
The \emph{Hamming distance} $\ham(u,v)$ between two equal-length words $u$, $v$ is the number of positions where $u$ and $v$ differ. The words $u$ and $v$ are said to be \emph{conjugates} if there exist non-empty words $x,y$ such that $u=xy$ and $v=yx$. The smallest value $\ham(xy,yx)$ can take on is $0$, when $x$ and $y$ commute. But, interestingly, the next smallest value $\ham(xy,yx)$ can take on is $2$ and not $1$. In this paper, we consider conjugates $u=xy$ and $v=yx$ where $\ham(xy,yx)=2$.  More specifically, we provide an efficient formula to count the number $h(n)$ of length-$n$ words $u=xy$ over a $k$-letter alphabet that have a conjugate $v=yx$ such that $\ham(xy,yx)=2$. We also provide efficient formulae for other quantities closely related to $h(n)$. Finally, we show that $h(n)$ grows erratically: cubically for $n$ prime, but exponentially for $n$ even.
\end{abstract}

\section{Introduction}
Let $\Sigma_k$ denote the alphabet $\{0,1,\ldots, k-1\}$. Let $u$ and $v$ be two words of equal length. The \emph{Hamming distance} $\ham(u,v)$ between $u$ and $v$ is defined to be the number of positions where $u$ and $v$ differ~\cite{Hamming:1950}. For example, $\ham({\tt four}, {\tt five}) = 3.$

A word $w$ is said to be a \emph{power} if it can be written as $w=z^i$ for some word $z$ where $i\geq 2$. Otherwise $w$ is said to be \emph{primitive}. For example, ${\tt hotshots} = ({\tt hots})^2$ is a power, but ${\tt hots}$ is primitive.  The words $u$ and $v$ are said to be \emph{conjugates} (or $v$ is a \emph{conjugate} of $u$) if there exist non-empty words $x$, $y$ such that $u = xy$ and $v = yx$. If $\ham(u,v)=\ham(xy,yx)=0$, then $x$ and $y$ are said to \emph{commute}. If $x$ and $y$ are both non-empty, then $v$ is said to be a \emph{non-trivial} conjugate of $u$. Let $\sigma$ be the left-shift map, so that $\sigma^i(u)=yx$ where $u=xy$ and $|x|=i$, where $i$ is an integer with $0\leq i \leq |u|.$ For example, any two of the words ${\tt eat}$, ${\tt tea}$, and ${\tt ate}$ are conjugates because ${\tt eat} =\sigma({\tt tea}) = \sigma^2({\tt ate})$.

Lyndon and Sch\"{u}tzenberger~\cite{Lyndon&Schutzenberger:1962} characterized all words $x$, $y$ that commute. Alternatively, they characterized all words $u$ that have a non-trivial conjugate $v$ such that $\ham(u,v)=0$. 
\begin{theorem}[Lyndon-Sch\"{u}tzenberger~\cite{Lyndon&Schutzenberger:1962}]\label{theorem:commute}
Let $u$ be a non-empty word. Then $u=xy$ has a non-trivial conjugate $v=yx$ such that $\ham(xy,yx)=0$ if and only if there exists a word $z$, and integers $i,j\geq 1$ such that $x=z^i$, $y=z^j$, and $u=v=z^{i+j}$.
\end{theorem}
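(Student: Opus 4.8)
The plan is to prove the two implications separately, treating the easy direction by direct substitution and the hard direction by induction on $|u|$.

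For the direction ($\Leftarrow$), suppose $x=z^i$ and $y=z^j$ with $i,j\geq 1$. Then $xy=z^iz^j=z^{i+j}=z^jz^i=yx$, so $xy$ and $yx$ agree in every position and $\ham(xy,yx)=0$; in particular $u=v=z^{i+j}$, as required.

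For the direction ($\Rightarrow$), I would first note that since $xy$ and $yx$ have the same length, $\ham(xy,yx)=0$ is equivalent to the identity $xy=yx$. I would then induct on $|u|=|x|+|y|$. If $|x|=|y|$, comparing the length-$|x|$ prefixes of the equal words $xy$ and $yx$ forces $x=y$, and one may take $z=x$, $i=j=1$. If $|x|\neq |y|$, assume without loss of generality that $|x|<|y|$ (the other case being symmetric). Since $x$ is a prefix of $xy=yx$ and $|x|<|y|$, the word $x$ is a proper prefix of $y$; write $y=xw$ with $w$ non-empty and $|w|=|y|-|x|$. Substituting into $xy=yx$ gives $x(xw)=(xw)x$, and cancelling the common prefix $x$ yields $xw=wx$. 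Now $x$ and $w$ are non-empty words that commute with $|x|+|w|=|y|<|u|$, so by the induction hypothesis there exist a word $z$ and integers $a,b\geq 1$ with $x=z^a$ and $w=z^b$. Then $y=xw=z^{a+b}$, so taking $i=a$ and $j=a+b$ completes the induction, with $u=xy=z^{i+j}=v$.

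This argument is essentially routine, and no single step is a genuine obstacle; the only points requiring care are that the word $w$ produced in the reduction is non-empty (which holds precisely because $|x|<|y|$, so the induction hypothesis, stated for non-empty words, applies) and that the reduction $(x,y)\mapsto(x,w)$ is one step of the subtractive Euclidean algorithm on the pair $(|x|,|y|)$, so the recursion terminates in the equal-length base case with $|z|=\gcd(|x|,|y|)$. Alternatively, one could deduce the forward direction from a periodicity lemma by observing that $xy=yx$ makes the word $xy$ simultaneously $|x|$-periodic and $|y|$-periodic, but the self-contained induction above seems cleanest here.
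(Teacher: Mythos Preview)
Your proof is correct, but there is nothing to compare it against: the paper does not prove Theorem~\ref{theorem:commute}. It is stated as a classical result of Lyndon and Sch\"{u}tzenberger and simply cited, then used as a black box elsewhere (e.g., in Corollary~\ref{corollary:notH} and in the proof of Lemma~\ref{lemma:twoPeriods}).

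For what it is worth, the argument you give is the standard one. Your observation that the reduction $(x,y)\mapsto(x,w)$ with $y=xw$ is a step of the subtractive Euclidean algorithm on $(|x|,|y|)$, and hence that $|z|=\gcd(|x|,|y|)$, is a nice extra that the bare statement does not record.
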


Later, Fine and Wilf~\cite{Fine&Wilf:1965} showed that one can achieve the forward implication of Theorem~\ref{theorem:commute} with a weaker hypothesis. Namely, that $xy$ and $yx$ need not be equal, but only agree on the first $|x|+|y|-\gcd(|x|,|y|)$ terms.
\begin{theorem}[Fine-Wilf~\cite{Fine&Wilf:1965}]\label{theorem:finewilf}
Let $x$ and $y$ be non-empty words. If $xy$ and $yx$ agree on a prefix of length at least $|x|+|y|-\gcd(|x|,|y|)$, then there exists a word $z$, and integers $i,j\geq 1$ such that $x=z^i$, $y=z^j$, and $xy=yx=z^{i+j}$.
\end{theorem}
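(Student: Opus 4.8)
The plan is to reduce to Theorem~\ref{theorem:commute}: it suffices to show that the hypothesis forces $xy=yx$, and then the equality case of Lyndon--Sch\"utzenberger supplies the word $z$ and the exponents $i,j$. Write $p=|x|$, $q=|y|$, $d=\gcd(p,q)$, $n=p+q$, and let $\ell\ge n-d$ be the length of the longest common prefix of $xy$ and $yx$.

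I would prove $xy=yx$ by induction on $n$. In a given instance we may assume $p\ge q$, since swapping $x$ and $y$ replaces $(xy,yx)$ by $(yx,xy)$ and leaves $\ell$, $n$, and $d$ all unchanged. If $p=q$ then $d=p$, so $\ell\ge p$, and comparing the length-$p$ prefixes of $xy$ and $yx$ gives $x=y$, hence $xy=yx$. Now suppose $p>q$. Since $d\le p$ we have $\ell\ge n-d\ge q$, so $xy$ and $yx$ agree on their first $q$ letters; because $q<p$, those letters are the length-$q$ prefix of $x$ on one side and all of $y$ on the other, so $y$ is a prefix of $x$. Write $x=yx'$ with $x'$ non-empty and $|x'|=p-q$. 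Then $xy=y\,x'y$ and $yx=y\,yx'$ share the prefix $y$, so $x'y$ and $yx'$ have a common prefix of length $\ell-q\ge(p+q-d)-q=|x'|+|y|-d$, while $\gcd(|x'|,|y|)=\gcd(p-q,q)=\gcd(p,q)=d$. Thus the pair $(x',y)$ satisfies the same hypothesis with strictly smaller total length $|x'|+|y|=p<n$, so by the induction hypothesis $x'y=yx'$; therefore $xy=y\,x'y=y\,yx'=yx$. Applying Theorem~\ref{theorem:commute} to $xy=yx$ now yields $z$, $i$, and $j$.

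The one thing to check carefully is the bookkeeping in the inductive step --- that after deleting the common prefix $y$ the residual pair $(x',y)$ still agrees on a prefix as long as its own threshold $|x'|+|y|-\gcd(|x'|,|y|)$ requires --- and this comes down to $\gcd(p-q,q)=\gcd(p,q)$ together with the inequality $\ell-q\ge|x'|+|y|-d$; I do not expect a real obstacle here. A second, perhaps more illuminating, proof of the intermediate claim $xy=yx$ uses the cyclic structure directly. Index the letters of $w:=xy$ by $\mathbb{Z}/n\mathbb{Z}$; since $yx=\sigma^{p}(w)$, the hypothesis says $w_i=w_{i+p}$ (indices mod $n$) for all but some $d$ consecutive residues $i$. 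Form the graph on $\mathbb{Z}/n\mathbb{Z}$ with an edge $\{i,i+p\}$ for each residue $i$ not among those $d$. With all $n$ edges present this graph is a disjoint union of $d$ cycles of length $n/d$, one per residue class modulo $d$ (here $\gcd(p,n)=\gcd(p,q)=d$); the $d$ omitted edges meet each class exactly once, so every cycle loses exactly one edge and stays connected. Hence $w$ is constant on each residue class modulo $d$, and since $p\equiv0\pmod d$ this forces $w_i=w_{i+p}$ for every $i$, i.e.\ $xy=yx$. Here the crux is the elementary fact that $d$ consecutive residues hit each class modulo $d$ once; that is precisely where the bound $n-\gcd(p,q)$ is used, and it shows the bound is sharp.
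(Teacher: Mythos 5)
The paper does not prove this statement at all --- it quotes the Fine--Wilf theorem from the literature and only proves the companion Lemma~\ref{lemma:finewilf2} about the sharpness examples --- so there is no in-paper proof to match; your argument stands on its own, and it is correct. Your main induction is sound: the case $p=q$ forces $x=y$ directly, and when $p>q$ the bound $\ell\ge p+q-d\ge q$ does make $y$ a prefix of $x$, and after writing $x=yx'$ the residual pair $(x',y)$ inherits a common prefix of length $\ell-q\ge p-d=|x'|+|y|-\gcd(|x'|,|y|)$ with the same gcd, so the induction closes and Theorem~\ref{theorem:commute} converts $xy=yx$ into the power statement. Notably, this ``peel off the shorter word'' induction is exactly the technique the paper itself uses to prove Lemma~\ref{lemma:finewilf2} (there one peels $y=xt$ and tracks the Hamming distance instead of the agreement length), so your first proof fits the paper's style closely. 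Your second, graph-theoretic argument is also correct and arguably more illuminating: since $\gcd(p,n)=d$, the shift-by-$p$ orbits are the residue classes mod $d$, each a cycle of length $n/d$, and the $d$ positions where agreement is not assumed are consecutive, hence meet each class exactly once, so every cycle remains connected after deleting one edge; this both proves the theorem and makes transparent why the threshold $|x|+|y|-\gcd(|x|,|y|)$ cannot be lowered, which is the point of the Fine--Wilf pairs discussed in the paper. Only cosmetic quibbles: in the degenerate case $n/d=2$ the ``cycle'' is a doubled edge, which does not affect the connectivity argument, and you could state explicitly that the induction terminates because $|x'|+|y|=p<n$ with $x'$ non-empty.
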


Fine and Wilf also showed that the bound of $|x|+|y|-\gcd(|x|,|y|)$ is optimal, in the sense that if $xy$ and $yx$ agree only on the first $|x|+|y|-\gcd(|x|,|y|)-1$ terms, then $xy$ need not equal $yx$. They demonstrated this by constructing words $x$, $y$ of any length such that $xy$ and $yx$ agree on the first $|x|+|y|-\gcd(|x|,|y|)-1$ terms and differ at position $|x|+|y|-\gcd(|x|+|y|)$. We call pairs of words $x$, $y$ of this form \emph{Fine-Wilf pairs}.

These words have been shown to have a close relationship with the well-known \emph{finite Sturmian words}~\cite{deLuca&Mignosi:1994}. 
\begin{example} We give some examples of words that display the optimality of the Fine-Wilf result.

\noindent
Let $x=000000010000$ and $y=00000001$. Then $|x|=12$, $|y|=8$, and $\gcd(|x|,|y|)=4$.
\begin{align}
    xy &= 000000010000000{\color{red}0}0001\nonumber \\
    yx &= 000000010000000{\color{red}1}0000\nonumber
\end{align}
Let $x=010100101010$ and $y=0101001$. Then $|x|=12$, $|y|=7$, and $\gcd(|x|,|y|)=1$.
\begin{align}
    xy &= 01010010101001010{\color{red}0}1 \nonumber \\
    yx &= 01010010101001010{\color{red}1}0\nonumber
\end{align}
\end{example}

One remarkable property of these words is that they ``almost" commute, in the sense that $xy$ and $yx$ agree for as long a prefix as possible and differ in as few positions as possible. See Lemma~\ref{lemma:finewilf2} for a proof of this property.

One might na\"{i}vely think that the smallest possible Hamming distance between $xy$ and $yx$ after $0$ is $1$, but this is incorrect. Shallit~\cite{Shallit:2009} showed that $\ham(xy,yx)\neq 1$ for any words $x$ and $y$; see Lemma~\ref{lemma:hammingNon1}. Thus, after $0$, the smallest possible Hamming distance between $xy$ and $yx$ is $2$. If $\ham(xy,yx)=2$, then we say $x$ and $y$ \emph{almost commute}.

\begin{lemma}[Shallit~\cite{Shallit:2009}]
\label{lemma:hammingNon1}
Let $x$ and $y$ be words. Then $\ham(xy,yx)\neq 1$.
\end{lemma}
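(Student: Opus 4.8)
The plan is to suppose for contradiction that $\ham(xy,yx)=1$ for some words $x,y$, necessarily of the same total length $n=|x|+|y|$, and to derive a periodicity contradiction from the single mismatch. Write $u=xy$ and $v=yx$, and let $p$ be the unique position where $u$ and $v$ differ, say $u[p]\neq v[p]$ while $u[i]=v[i]$ for all $i\neq p$. Since $v=\sigma^{|x|}(u)$, the word $v$ is just $u$ cyclically shifted by $|x|$ positions; equivalently, indexing positions modulo $n$ on a circle, $v[i]=u[i+|x|]$. The condition that $u$ and $v$ agree everywhere except at one position then says that the circular word $u$ (on $n$ positions) agrees with its rotation by $|x|$ in all but exactly one of the $n$ circular positions.

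First I would set $d=\gcd(|x|,n)=\gcd(|x|,|y|)$ and partition the $n$ circular positions into $d$ residue classes modulo $d$, each of size $n/d$, on which the rotation by $|x|$ acts as a single $n/d$-cycle. The mismatch at position $p$ lies in exactly one of these classes; on every other class, consecutive elements under the rotation all agree, so $u$ is constant on each of those $d-1$ classes. On the class containing $p$, going around the $n/d$-cycle we have agreement at every step except the one step across position $p$, which forces $u$ to be constant on that class too — wait, that is not quite forced, so more carefully: on that cycle $c_0\to c_1\to\cdots\to c_{n/d-1}\to c_0$ we have $u[c_j]=u[c_{j+1}]$ for all steps except one, which forces all the $c_j$ to share a common value except that one "break." So $u$ restricted to that class takes at most two values, forming two contiguous arcs.

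Next I would exploit that $v$ must also be a genuine conjugate of $u$, i.e. $v$ is an actual rotation, and run a counting/parity argument. The cleanest route: the mismatch being a single position is incompatible with $v$ being obtainable from $u$ by a cyclic shift, because a cyclic shift is a bijection of positions, and one can count letter-occurrences. Indeed $u$ and $v$ are conjugate, hence have the same Parikh vector (same multiset of letters). But if they differ in exactly one position $p$, then changing $u[p]$ to $v[p]$ changes the Parikh vector: it decreases the count of letter $u[p]$ by one and increases the count of letter $v[p]\neq u[p]$ by one. That contradicts $u$ and $v$ having equal Parikh vectors. Hence $\ham(xy,yx)\neq 1$.

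That last paragraph is in fact a complete one-line proof, so the "main obstacle" is really just recognizing that conjugacy preserves the letter-count multiset and that a single substitution cannot preserve it — everything else (the Fine–Wilf machinery, the cycle decomposition) is unnecessary for this particular lemma, though it would be the natural tool if one wanted to understand the $\ham=2$ case that the paper actually studies. So my recommended writeup is: (1) note $|xy|=|yx|$ so Hamming distance is defined; (2) note $xy$ and $yx$ are anagrams of each other, having the same number of occurrences of each letter; (3) observe that two equal-length words with the same Parikh vector cannot differ in exactly one position, since correcting that one position would alter some letter's count; (4) conclude $\ham(xy,yx)\in\{0,2,3,\dots\}$, never $1$. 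The only thing to be careful about is the trivial edge case where one of $x,y$ is empty, in which case $xy=yx$ and the distance is $0$, still not $1$; and the degenerate reading where $x$ or $y$ might be empty is harmless.
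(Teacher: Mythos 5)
Your proof is correct, and the decisive step is the Parikh-vector observation in your third paragraph: $xy$ and $yx$ contain exactly the same multiset of letters (both consist of the letters of $x$ together with the letters of $y$), and two equal-length words with the same letter counts cannot differ in exactly one position, since replacing the single offending letter would change one letter's count by $-1$ and another's by $+1$. Note that the paper itself gives no proof of this lemma --- it is quoted from Shallit's paper --- so there is nothing internal to compare against, but your counting argument is the standard and essentially minimal one. Your first two paragraphs (the rotation-by-$|x|$ cycle decomposition into $\gcd(|x|,|y|)$ classes) are correct as far as they go but, as you yourself observe, entirely unnecessary for this lemma; that machinery is what one would reach for in analyzing the $\ham(xy,yx)=2$ case, which is the subject of the rest of the paper. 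For a final writeup, keep only steps (1)--(4) of your last paragraph; the edge case with $x$ or $y$ empty is, as you say, harmless since then $xy=yx$.
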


A similar concept, called the \emph{$2$-error border}, was introduced in a paper by Klav\v{z}ar and Shpectorov~\cite{Sanda&Sergey:2012}. A word $w$ is said to have a \emph{$2$-error border} of length $i$ if there exists a length-$i$ prefix $u$ of $w$, and a length-$i$ suffix $u'$ of $w$ such that $w = ux = yu'$ and $\ham(u,u')=2$ for some $x$, $y$. The $2$-error border was originally introduced in an attempt to construct graphs that have properties similar to $n$-dimensional hypercubes. The $n$-dimensional hypercube is a graph that models Hamming distance between length-$n$ binary words. See~\cite{Wei:2017, Wei&Yang&Zhu:2019, Beal&Crochemore:2021} for more on $2$-error borders.

In this paper, we characterize and count all words $u$ that have a conjugate $v$ such that $\ham(u,v)=2$. As a result, we also characterize and count all pairs of words $x$, $y$ that almost commute.

Let $n$ and $i$ be integers such that $n>i\geq 1$. Let $H(n)$ denote the set of length-$n$ words $u$ over $\Sigma_k$ that have a conjugate $v$ such that $\ham(u,v) =2$. Let $h(n)=|H(n)|$. Let $H(n,i)$ denote the set of length-$n$ words $u$ over $\Sigma_k$ such that $\ham(u,\sigma^i(u))=2$. Let $h(n,i) = |H(n,i)|$.

The rest of the paper is structured as follows. In Section~\ref{section:finewilf} we prove that Fine-Wilf pairs almost commute. In Section~\ref{section:countH} we characterize the words in $H(n,i)$ and present a formula to calculate $h(n,i)$. In Section~\ref{section:interlude} we prove some properties of $H(n,i)$ and $H(n)$ that we make use of in later sections. In Section~\ref{section:finalcount} we present a formula to calculate $h(n)$. In Section~\ref{section:exactly} we count the number of length-$n$ words $u$ with \emph{exactly} one conjugate such that $\ham(u,v)=2$. In Section~\ref{section:lyndon} we count the number of Lyndon words in $H(n)$. Finally, in Section~\ref{section:asymptotics} we show that $h(n)$ grows erratically.

\section{Fine-Wilf pairs almost commute}\label{section:finewilf}
In this section we prove that Fine-Wilf pairs almost commute. This result appears without proof in~\cite{Shallit:2015}.

\begin{lemma}\label{lemma:finewilf2}
Let $x$ and $y$ be non-empty words. Suppose $xy$ and $yx$ agree on a prefix of length $|x|+|y|-\gcd(|x|,|y|)-1$ but disagree at position $|x|+|y|-\gcd(|x|,|y|)$. Then $\ham(xy,yx)=2$.
\end{lemma}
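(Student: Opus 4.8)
The plan is to leverage the Fine-Wilf theorem (Theorem~\ref{theorem:finewilf}) together with a careful index-tracking argument. Write $n = |x| + |y|$, $d = \gcd(|x|,|y|)$, and let $m = n - d$ be the position of the first disagreement (using $1$-indexing, so positions $1, \dots, m-1$ agree and position $m$ disagrees). The first observation is that $m < n$, since $d \geq 1$, so position $m$ really is a position inside the length-$n$ words $xy$ and $yx$; this means $\ham(xy,yx) \geq 1$, and by Lemma~\ref{lemma:hammingNon1} we immediately get $\ham(xy,yx) \geq 2$. So the whole content of the lemma is the upper bound: I must show $xy$ and $yx$ differ in \emph{at most} two positions, i.e., they agree at every position other than $m$ and at most one more.

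The key idea for the upper bound is that $\sigma$ (the left shift) relates $xy$ and $yx$: we have $yx = \sigma^{|x|}(xy)$ viewed cyclically, so comparing $xy$ with $yx$ position-by-position amounts to comparing the word $w = xy$ with its cyclic shift by $|x|$. Concretely, writing $w = w_1 w_2 \cdots w_n$, the word $yx$ has $i$-th letter $w_{i + |x| \bmod n}$ (with representatives taken in $\{1,\dots,n\}$). So position $i$ is an agreement iff $w_i = w_{(i + |x|) \bmod n}$. The hypothesis says this holds for $i = 1, \dots, m-1$ and fails for $i = m = n - d$. Now I would chase the equalities: agreement at positions $1, \dots, n-d-1$ gives $w_i = w_{i+|x| \bmod n}$ for those $i$, which by transitivity along the cyclic orbit structure (orbits of the map $i \mapsto i + |x| \bmod n$, each of size $n/d$) forces large blocks of $w$ to be constant along each orbit — in fact it should force $w$ to agree with $z^{n/d}$ for a length-$d$ word $z$ on a long prefix, reproducing the Fine-Wilf mechanism. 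The single failure at position $n-d$ then propagates to exactly one more forced failure (its "partner" under the shift, namely position $n - d - |x| \bmod n$ or $n - d + |x| \bmod n$, whichever lies in range), and no others, because every other position is still pinned down by a chain of valid equalities back to the agreeing prefix.

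More carefully, here is the route I would take. Let $p = |x|$, $q = |y|$, $n = p+q$, $d = \gcd(p,q)$. By Fine-Wilf applied to the prefixes: from agreement on positions $1, \dots, n-d-1$ one extracts that the prefix of $xy$ of length $n - d$... — actually the cleanest path is: consider the two words $x'y'$ and $y'x'$ obtained by deleting the last letter appropriately, or better, use the standard Fine-Wilf "defect" argument directly on the cyclic word. I would argue: since $xy$ and $yx$ agree on a prefix of length $n-d-1 = p + q - d - 1$, the pair $(x, y)$ "almost" satisfies the Fine-Wilf hypothesis; introduce $z$, the length-$d$ prefix of $x$ (equivalently of $xy$), and show by induction on position $j$ that $w_j = z_{((j-1) \bmod d) + 1}$ for all $j < n - d$, using the chain of shift-equalities. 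Then examine position $m = n-d$: the disagreement $w_m \neq w_{(m+p) \bmod n}$ with $w_{(m+p)\bmod n}$ being a position $< n - d$ (which I should check: $m + p \equiv n - d + p \equiv p - d \pmod n$, and since $1 \le p - d < n - d$ as long as $p > d$, wait — need to handle $p = d$ separately, i.e., when $|x|$ divides $|y|$). In that case $w_m$ is forced to disagree with the "$z$-pattern," and one then checks that this single anomaly in $w$, when the cyclic shift is applied, shows up in exactly two comparison positions: position $m$ itself and position $m - p \bmod n$ (the position whose shifted partner is $m$). All other positions $i \neq m, \ m - p \bmod n$ compare two letters that both equal their $z$-pattern values, hence agree. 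That gives $\ham(xy,yx) \le 2$, and combined with the lower bound, equality.

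The main obstacle I anticipate is the bookkeeping of indices modulo $n$, especially the boundary cases where $p \mid q$ (so $d = p$, $z = x$) or $q \mid p$, where the "partner" position of the failure might coincide with a boundary or wrap around in a degenerate way; I would treat these as separate short cases. A secondary subtlety is making the induction that propagates the $z$-pattern fully rigorous — one must be sure the chain of shift-equalities from a given position $j < n-d$ back into the known-agreeing region never passes through position $m$, which is exactly why the hypothesis gives agreement on a prefix of length $n - d - 1$ and not something shorter. I expect this propagation argument — essentially re-deriving the Fine-Wilf periodicity with one defect — to be the technical heart of the proof.
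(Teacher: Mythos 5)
Your reduction to an upper bound is fine: a mismatch at position $|x|+|y|-\gcd(|x|,|y|)$ gives $\ham(xy,yx)\geq 1$, hence $\geq 2$ by Lemma~\ref{lemma:hammingNon1}. But the upper-bound argument, which is the whole content of the lemma, hinges on a claim that is false. Writing $w=xy$, $n=|x|+|y|$, $p=|x|$, $d=\gcd(|x|,|y|)$, $m=n-d$, you propose to prove by induction that $w_j=z_{((j-1)\bmod d)+1}$ for all $j<m$, i.e., that the agreeing prefix is $d$-periodic, and then to treat $w$ as a $d$-periodic pattern with a single anomalous letter whose shifted image accounts for exactly two mismatches. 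The hypothesis, however, gives agreement only up to length $n-d-1$, which is exactly one letter short of the Fine--Wilf threshold, and the words satisfying the hypothesis are precisely the extremal ones for which period $d$ fails; that is the very point of the optimality discussion. Concretely, take the paper's own example $x=000000010000$, $y=00000001$ ($p=12$, $|y|=8$, $d=4$, $n=20$): the common prefix $000000010000000$ of length $15$ has periods $12$ and $8$ but not period $4$ (its $4$th and $8$th letters differ), and $xy=00000001000000000001$ is not within one letter of any $4$-periodic word. So the induction you describe cannot even get started, and the ``single anomaly'' picture of $w$ is wrong. Your identification of the second mismatch is also incorrect: you place it at $m-p\bmod n$ (or at $n-d\pm p\bmod n$), but for $|y|>d$ the position $m-p=|y|-d$ lies inside the agreeing prefix and so cannot be a mismatch; in the example the two mismatches are at positions $16=n-d$ and $20=n$, not at $4$ or $8$.

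The orbit/transitivity idea can be repaired, but only in a form that never asserts periodicity of $w$. The comparisons not covered by the hypothesis are those at positions $n-d,\dots,n$; the orbits of $i\mapsto i+p\bmod n$ are the residue classes modulo $d$, each a cycle of length $n/d$, and these $d+1$ consecutive positions meet every class exactly once except the class of $n$, which they meet twice (at $n-d$ and at $n$). In each other class the lone uncovered comparison is forced to be an equality by transitivity around its cycle; in the class of $n$, the two equality chains obtained by deleting the comparisons at $n-d$ and $n$ give $w_{p}=\cdots=w_{n-d}$ and $w_{(n-d+p)\bmod n}=\cdots=w_{n}$, so the known mismatch at $n-d$ forces exactly one further mismatch, at position $n$ (comparing $w_n$ with $w_p$), and no others. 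That yields $\ham(xy,yx)=2$ directly, without invoking Lemma~\ref{lemma:hammingNon1}. The paper avoids this modular bookkeeping altogether with a short Euclidean-style induction on $|x|+|y|$: if $|x|=|y|$ then $\ham(x,y)=1$ and $\ham(xy,yx)=2\ham(x,y)=2$; if $|x|<|y|$, the agreeing prefix has length at least $|y|-1$, so $x$ is a proper prefix of $y$, and writing $y=xt$ one gets $\ham(xy,yx)=\ham(xt,tx)$ with $\gcd(|x|,|t|)=\gcd(|x|,|y|)$, so induction applies. As written, though, your proposal does not close the upper bound.
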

\begin{proof}
The proof is by induction on $|x|+|y|$. Suppose $xy$ and $yx$ agree on a prefix of length $|x|+|y|-\gcd(|x|,|y|)-1$ but disagree at position $|x|+|y|-\gcd(|x|,|y|)$. Without loss of generality, let $|x|\leq |y|$.

First, we take care of the case when $|x|=|y|$, which also takes care of the base case $|x|+|y|=2$. Since $|x|=|y|$, we have that $\gcd(|x|,|y|)=|x|=|y|$. Therefore, $x$ and $y$ share a prefix of length $|x|+|y|-\gcd(|x|,|y|)-1=|x|-1$ but disagree at position $|x|$. This implies that $\ham(x,y)=1$. Thus $\ham(xy,yx)=2\ham(x,y)=2$.

Suppose $|x|<|y|$. Then $\gcd(|x|,|y|) \leq |x|$. So $|x|+|y|-\gcd(|x|,|y|)-1\geq |y|-1$. Thus $xy$ and $yx$ must share a prefix of length $\geq |y|-1$. However, since $|x|<|y|$, we have that $x$ must then be a proper prefix of $y$. So write $y=xt$ for some non-empty word $t$. Then $\ham(xy,yx)=\ham(xxt,xtx)=\ham(xt,tx)$. Since $xt$, $tx$ are suffixes of $xy$, $yx$ we have that $xt$ and $tx$ agree on the first $|y|-\gcd(|x|,|y|)-1$ terms and disagree at position $|y|-\gcd(|x|,|y|)$.  Clearly $\gcd(|x|,|y|)=\gcd(|x|,|xt|) =\gcd(|x|,|x|+|t|)=\gcd(|x|,|t|)$, and $|y|-\gcd(|x|,|y|) = |x|+|t|-\gcd(|x|,|t|)$. Therefore $xt$ and $tx$ share a prefix of length $|x|+|t|-\gcd(|x|,|t|)-1$ and differ at position $|x|+|t|-\gcd(|x|,|t|)$. By induction $\ham(xt,tx)=2$, and thus $\ham(xy,yx)=2$.
\end{proof}

\section{Counting $H(n,i)$}\label{section:countH}
In this section we characterize the words in $H(n,i)$ and use this characterization to provide an explicit formula for $H(n,i)$.

\begin{lemma}\label{lemma:bijection}
Let $n$, $i$ be positive integers such that $n>i$. Let $g=\gcd(n,i)$. Let $w$ be a length-$n$ word. Let $w=x_0x_1\cdots x_{n/g-1}$ where $|x_j|=g$ for all $j$, $0\leq j \leq n/g-1$. Then $w\in H(n,i)$ iff there exist two distinct integers $j_1$, $j_2$, $0\leq j_1 < j_2\leq n/g-1$ such that $\ham(x_{j_1},x_{j_2})=1$ and $x_{j}= x_{(j+i/g)\bmod{n/g}}$ for all $j\neq j_1,j_2$, $0\leq j \leq n/g-1$.
\end{lemma}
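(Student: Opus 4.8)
The plan is to analyze the structure of $w$ and $\sigma^i(w)$ by cutting both into blocks of length $g = \gcd(n,i)$. Writing $w = x_0 x_1 \cdots x_{n/g - 1}$ with $|x_j| = g$, the left-shift by $i$ permutes these blocks: one checks that $\sigma^i(w) = x_{i/g} x_{1 + i/g} \cdots x_{(n/g - 1 + i/g)}$, with all block indices read modulo $n/g$. Since $\gcd(i/g, n/g) = 1$, the map $j \mapsto (j + i/g) \bmod (n/g)$ is a cyclic permutation of $\mathbb{Z}/(n/g)$, so comparing $w$ with $\sigma^i(w)$ position-by-position amounts to comparing $x_j$ with $x_{(j + i/g) \bmod (n/g)}$ for each $j$. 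Therefore
\[
\ham(w, \sigma^i(w)) \;=\; \sum_{j=0}^{n/g - 1} \ham\bigl(x_j,\; x_{(j + i/g) \bmod (n/g)}\bigr).
\]

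For the reverse implication, suppose there are distinct $j_1 < j_2$ with $\ham(x_{j_1}, x_{j_2}) = 1$ and $x_j = x_{(j + i/g) \bmod (n/g)}$ for all $j \neq j_1, j_2$. Then every term in the sum above vanishes except possibly the two terms indexed by $j$ with $(j + i/g) \bmod (n/g) \in \{j_1, j_2\}$ where the "target" block differs; I would argue that exactly the terms $j = j_1$ and $j = j_2$ survive — because the shift is a bijection, the preimages of $j_1$ and $j_2$ are themselves two blocks, and tracing through the hypothesis shows each contributes $\ham(x_{j_1}, x_{j_2}) = 1$ — giving total Hamming distance $2$, hence $w \in H(n,i)$. (A small bookkeeping point: one must confirm the two surviving terms are genuinely these two and not collapsed into one; this follows since $j_1 \neq j_2$ and the permutation is a single $n/g$-cycle.)

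For the forward implication, assume $w \in H(n,i)$, i.e. the sum equals $2$. Each summand is a nonnegative integer, so either one summand equals $2$ and the rest vanish, or exactly two summands equal $1$ and the rest vanish. I would rule out the first case: if $\ham(x_j, x_{(j+i/g) \bmod (n/g)}) = 2$ and all other consecutive-in-the-cycle block pairs agree, then going around the single cycle $j, j + i/g, j + 2i/g, \ldots$ one sees all blocks are equal except one "jump" of Hamming distance $2$ — but chasing equalities around a cycle forces the two endpoints of that jump to be equal, a contradiction (alternatively: a cyclic sequence of blocks in which all but one adjacent pair are equal must be constant). So we are in the second case: exactly two cyclically-adjacent pairs, say at positions $j$ and $j'$, have Hamming distance $1$ and all others agree. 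Collapsing the chains of equalities, all blocks take one of exactly two values $a, b$ with $\ham(a,b) = 1$, and the set of indices with value $b$ is a single contiguous arc of the cycle; picking $j_1, j_2$ to be the two blocks where the value changes (equivalently, appropriate representatives among these two-valued blocks) yields the claimed $j_1, j_2$ with $\ham(x_{j_1}, x_{j_2}) = 1$ and agreement elsewhere.

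The main obstacle I anticipate is the careful index bookkeeping in both directions: correctly identifying which block indices are shifted onto which, ensuring the "two surviving terms" are distinct and not accidentally the same term, and making the cycle-chasing argument (that a cyclic word of blocks with all-but-one adjacent equalities is constant, and all-but-two adjacent equalities is two-valued on an arc) fully rigorous rather than merely plausible. Everything else — the block decomposition of $\sigma^i$, the coprimality of $i/g$ and $n/g$, and the additivity of Hamming distance over the blocks — is routine. I would present the cycle-chasing as the key lemma-within-the-proof and handle it explicitly, since it is where an error would most easily hide.
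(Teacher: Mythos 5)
Your proposal is correct and follows essentially the same route as the paper's proof: decompose into length-$\gcd(n,i)$ blocks, use additivity of Hamming distance over blocks, exploit that $j\mapsto (j+i/g)\bmod (n/g)$ is a single $n/g$-cycle to chase equalities, rule out the single distance-$2$ block case, and in the two distance-$1$ case identify $j_1,j_2$ as the two positions where the block value changes. The only minor slip is your initial framing in the reverse direction of the surviving summands as those whose \emph{target} lies in $\{j_1,j_2\}$ (they are simply the summands indexed by $j_1$ and $j_2$, since all others vanish by hypothesis), which you then correct and which does not affect the argument.
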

\begin{proof} We write $w= x_0x_1\cdots x_{n/g-1}$ where $|x_j|=g$ for all $j$, $0\leq j \leq n/g-1$. Since $g$ divides $i$, we have that $\sigma^i(w) = x_{i/g}\cdots x_{n/g-1}x_0\cdots x_{i/g-1}$.

$\Longrightarrow:$ Suppose $w\in H(n,i)$. Then
\begin{align}
    \ham(w,\sigma^i(w)) &= \ham(x_0x_1\cdots x_{n/g-1},x_{i/g}\cdots x_{n/g-1}x_0\cdots x_{i/g-1})\nonumber \\
                        &= \sum_{j=0}^{n/g-1}\ham(x_j,x_{(j+i/g)\bmod{n/g}})\nonumber \\
                        &=2.\nonumber
\end{align}
In order for the Hamming distance between $w$ and $\sigma^i(w)$ to be $2$, we must have that either
\begin{itemize}
    \item $\ham(x_j,x_{(j+i/g)\bmod {n/g}})=2$ for exactly one $j$, $0 \leq j \leq n/g-1$; or
    \item $\ham(x_{j_1}, x_{(j_1+i/g)\bmod{n/g}})=1$ and $\ham(x_{j_2}, x_{(j_2+i/g)\bmod{n/g}})=1$ for two distinct integers $j_1,j_2$, $0\leq j_1<j_2\leq n/g-1$.
\end{itemize}

Suppose $\ham(x_j,x_{(j+i/g)\bmod {n/g}})=2$ for some $j$, $0 \leq j \leq n/g-1$. Then it follows that $x_p = x_{(p+i/g)\bmod {n/g}}$ for all $p\neq j$, $0\leq p \leq n/g-1$. Since $g=\gcd(n,i)$, we have that $\gcd(n/g,i/g)=1$. The additive order of $i/g$ modulo $n/g$ is $\frac{n/g}{\gcd(n/g,i/g)}=n/g$. Therefore, we have that \[x_{(j+i/g)\bmod {n/g}} = x_{(j+2i/g)\bmod {n/g}}=\cdots =x_{(j+(n/g-1)i/g)\bmod {n/g}}=x_j\] and $\ham(x_j,x_{(j+i/g)\bmod {n/g}})=2$, a contradiction.

Suppose $\ham(x_{j_1}, x_{(j_1+i/g)\bmod{n/g}})=1$ and $\ham(x_{j_2}, x_{(j_2+i/g)\bmod{n/g}})=1$ for two distinct integers $j_1,j_2$, $0\leq j_1<j_2\leq n/g-1$. Then it follows that $x_j = x_{(j+i/g)\bmod{n/g}}$ for all $j \neq j_1,j_2$, $0\leq j \leq n/g-1$. Since the additive order of $i/g$ modulo $n/g$ is $n/g$, we have that if we start at $j_1$ and successively add $i/g$ and    take the result modulo $n/g$, then we will reach every integer between $0$ and $n/g-1$. Therefore, we will reach $j_2$ before we reach $j_1$ again. Thus, since $x_j=x_{(j+i/g)\bmod{n/g}}$ for all $j\neq j_1,j_2$, $0\leq j \leq n/g-1$, we have that \[x_{(j_1+i/g)\bmod{n/g}} = x_{(j_1+2i/g)\bmod{n/g}} = \cdots = x_{j_2}.\] But now we have $\ham(x_{j_1}, x_{(j_1+i/g)\bmod{n/g}})=1$ and $x_{(j_1+i/g)\bmod{n/g}} =x_{j_2}$, which implies $\ham(x_{j_1},x_{j_2})=1$.

$\Longleftarrow:$ Suppose there exist two distinct integers $j_1$, $j_2$, $0\leq j_1<j_2\leq n/g-1$ such that $\ham(x_{j_1},x_{j_2}) = 1$ and $x_j=x_{(j+i/g)\bmod{n/g}}$ for all $j\neq j_1,j_2$, $0\leq j \leq n/g-1$. Since the additive order of $i/g$ modulo $n/g$ is $n/g$, we have that if we start at $j_1$ and successively add $i/g$ modulo $n/g$, then we will reach every integer between $0$ and $n/g-1$. But this means that we will reach $j_2$ before we get to $j_1$ again. Thus, we have that \[x_{(j_1+i/g)\bmod{n/g}} = x_{(j_1+2i/g)\bmod{n/g}} = \cdots = x_{j_2}.\] Similarly, if we start at $j_2$ and successively add $i/g$ modulo $n/g$ we will reach $j_1$ before looping back to $j_2$. So \[x_{(j_2+i/g)\bmod{n/g}} = x_{(j_2+2i/g)\bmod{n/g}} = \cdots = x_{j_1}.\] Therefore, we have that $w\in H(n,i)$ since

\begin{align}
    \ham(w,\sigma^{i}(w)) &= \ham(x_0x_1\cdots x_{n/g-1},x_{i/g}\cdots x_{n/g-1}x_0\cdots x_{i/g-1})\nonumber \\
    &=\sum_{j=0}^{n/g-1}\ham(x_j,x_{(j+i/g)\bmod{n/g}})\nonumber \\
    &= \ham(x_{j_1}, x_{(j_1+i/g)\bmod{n/g}}) +\ham(x_{j_2}, x_{(j_2+i/g)\bmod{n/g}})\nonumber\\
    &= \ham(x_{j_1}, x_{j_2}) +\ham(x_{j_2}, x_{j_1})\nonumber\\
    &=2.\nonumber
\end{align}
\end{proof}

\begin{lemma}\label{lemma:formula} Let $n,i$ be positive integers such that $n>i.$ Then
\[h(n,i) = \frac{1}{2}k^{\gcd(n,i)}(k-1)n\bigg(\frac{n}{\gcd(n,i)}-1\bigg).\]
\end{lemma}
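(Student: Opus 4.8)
The plan is to count the words in $H(n,i)$ directly from the characterization in Lemma~\ref{lemma:bijection}. Fix $n$ and $i$, write $g=\gcd(n,i)$, and set $m=n/g$. By Lemma~\ref{lemma:bijection}, a word $w=x_0x_1\cdots x_{m-1}$ (each $|x_j|=g$) lies in $H(n,i)$ exactly when there are two indices $j_1<j_2$ with $\ham(x_{j_1},x_{j_2})=1$ and $x_j=x_{(j+i/g)\bmod m}$ for every other $j$. So I would parametrize such a $w$ by (a) the choice of which two blocks are the ``defect'' pair and (b) the content of the blocks. The cleanest way is to think along the single cycle generated by adding $i/g$ modulo $m$: since $\gcd(m,i/g)=1$, this permutation is an $m$-cycle, so relabel the blocks along the cycle as $y_0,y_1,\ldots,y_{m-1}$ where $y_{t+1}=x_{(j+ (t+1) i/g)}$. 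The condition $x_j=x_{(j+i/g)\bmod m}$ for all $j$ outside $\{j_1,j_2\}$ means that, traversing the cycle, the block content is constant except across (at most) the two ``breakpoints'' corresponding to $j_1$ and $j_2$.

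Here is how I would organize the count. First choose an unordered pair of positions on the cycle of length $m$ where the content is allowed to change: there are $\binom{m}{2}$ such pairs, but it is easier to first count ordered data and divide by $2$ at the end. Cutting the $m$-cycle at these two breakpoints splits the $m$ blocks into two nonempty arcs; on each arc all blocks are equal. So the data is: a pair of distinct positions on the cycle (determining the two arcs), a word $a\in\Sigma_k^g$ for the first arc, and a word $b\in\Sigma_k^g$ for the second arc, subject to $\ham(a,b)=1$. The number of ordered pairs $(a,b)$ with $a,b\in\Sigma_k^g$ and $\ham(a,b)=1$ is $k^g\cdot g\cdot(k-1)$ (choose $a$ freely, choose the one coordinate where they differ, choose the differing value). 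Next I must count the number of ways to choose the two breakpoints. A breakpoint is an edge of the cycle, i.e.\ a pair of cyclically-consecutive block positions; there are $m$ edges, and we choose $2$ of them, giving $\binom{m}{2}=\frac{1}{2}m(m-1)$ choices, and each such choice of two edges partitions the cycle into exactly two nonempty arcs. Now multiplying: the number of $w\in H(n,i)$ is $\binom{m}{2}\cdot k^g g(k-1) = \frac{1}{2}m(m-1)\,k^g g (k-1)$. Since $m=n/g$ we have $m g = n$, so $m(m-1)g = n(m-1) = n\big(\tfrac{n}{g}-1\big)$, yielding $h(n,i)=\frac{1}{2}k^{g}(k-1)n\big(\tfrac{n}{\gcd(n,i)}-1\big)$, as claimed.

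The step that requires the most care — and the main obstacle — is making sure the correspondence between words $w$ in $H(n,i)$ and the tuples (pair of edges, arc-words $a,b$ with $\ham(a,b)=1$) is a genuine bijection with no overcounting or undercounting. Two things need checking. First, each $w\in H(n,i)$ determines its defect pair $\{j_1,j_2\}$ \emph{uniquely} (the Hamming distance being exactly $2$ forces exactly the two blocks in the cycle where the content changes), so the set of breakpoint-edges is uniquely recovered; I should note that because $m\ge 2$ and the two arcs are nonempty, the words $a$ and $b$ are also uniquely read off. Second, and conversely, I must verify that distinct tuples give distinct words and that every such tuple actually produces a word with $\ham(w,\sigma^i(w))=2$ rather than $0$ — but $\ham(a,b)=1\ne 0$ guarantees $a\ne b$, so the two arcs genuinely carry different content and the defect is real; this is exactly the ``$\Longleftarrow$'' direction of Lemma~\ref{lemma:bijection}. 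I would also double-check the edge-case $m=2$ (only one pair of edges, two arcs each a single block), where the formula gives $\binom{2}{2}=1$ times $k^g g(k-1)$, matching the direct count of pairs $x_0,x_1$ with $\ham(x_0,x_1)=1$.

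Finally I would assemble these observations into a short proof: state $g=\gcd(n,i)$, $m=n/g$; invoke Lemma~\ref{lemma:bijection}; set up the $m$-cycle of blocks under the shift-by-$i/g$ permutation (justified by $\gcd(m,i/g)=1$); count the tuples as above; confirm the bijection in both directions; and simplify $\frac12 m(m-1)k^g g(k-1)$ to the stated closed form using $mg=n$. The only genuinely delicate point, worth spelling out explicitly rather than leaving to the reader, is why the defect pair and hence the two breakpoint-edges are uniquely determined by $w$ — everything else is bookkeeping.
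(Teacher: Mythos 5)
Your proposal is correct and follows essentially the same route as the paper: both invoke Lemma~\ref{lemma:bijection} and count $\binom{n/g}{2}$ choices for the defect pair times $k^g$ choices for one block times $g(k-1)$ choices for the other, giving $\tfrac12 k^{g}(k-1)n\big(\tfrac{n}{g}-1\big)$. Your cycle-edge/arc bookkeeping (and the explicit check that the defect pair is uniquely recoverable from $w$) is just a relabeling of the paper's choice of $j_1<j_2$, $x_{j_1}$, $x_{j_2}$, though spelling out that uniqueness is a nice touch the paper leaves implicit.
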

\begin{proof}
Let $w$ be a length-$n$ word. Let $g=\gcd(n,i)$. We split up $w$ into length-$g$ blocks. We write $w=x_0x_1\cdots x_{n/g-1}$ where $|x_j|=g$ for all $j$, $0\leq j \leq n/g-1$. Lemma~\ref{lemma:bijection} gives a complete characterization of $H(n,i)$. Namely, the word $w$ is in $H(n,i)$ if and only if there exist two distinct integers $j_1,j_2$, $0\leq j_1<j_2\leq n/g-1$ such that $\ham(x_{j_1},x_{j_2})=1$ and $x_j=x_{(j+i/g)\bmod{n/g}}$ for all $j\neq j_1,j_2$, $0\leq j\leq n/g-1$. Given $j_1$, $j_2$, $x_{j_1}$, and $x_{j_2}$, all $x_j$ for $j\neq j_1,j_2$, $0\leq j \leq n/g-1$ are already determined.

There are \[\sum_{j_2=1}^{n/g-1}\sum_{j_1=0}^{j_2-1} 1 = \frac{1}{2}\frac{n}{g}\bigg(\frac{n}{g}-1\bigg)\] choices for $j_1$ and $j_2$. There are $k^g$ options for $x_{j_1}$. Considering that $x_{j_1}$ and $x_{j_2}$ differ in exactly one position, there are $g(k-1)$ choices for $x_{j_2}$ given $x_{j_1}$. Putting everything together we have that
\begin{align}
    h(n,i) &= 
    \overbrace{\frac{1}{2}\frac{n}{g}\bigg(\frac{n}{g} -1\bigg)}^{\text{choices for }j_1\text{ and } j_2}\overbrace{k^g}^{\text{choices for }x_{j_1}}\overbrace{g(k-1)}^{\text{choices for }x_{j_2}\text{ given }x_{j_1}}\nonumber\\
    &= \frac{1}{2}k^{\gcd(n,i)}(k-1)n\bigg(\frac{n}{\gcd(n,i)}-1\bigg). \nonumber 
\end{align}
\end{proof}

\begin{corollary}
Let $m,n\geq 1$ be integers. Then there are exactly \[h(n+m,m) = \frac{1}{2}k^{\gcd(n+m,m)}(k-1)(n+m)\bigg(\frac{n+m}{\gcd(n+m,m)}-1\bigg).\] pairs of words $(x,y)$ of length $(m,n)$ such that $\ham(xy,yx)=2$.
\end{corollary}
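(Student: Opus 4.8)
The plan is to derive the corollary directly from Lemma~\ref{lemma:formula} by a simple change of variables, observing that the pairs $(x,y)$ with $|x|=m$ and $|y|=n$ are exactly in bijection with the words $w=xy\in H(n+m,m)$. First I would set $N=n+m$ and $\iota=m$, noting that the hypotheses $m,n\geq 1$ give $N>\iota\geq 1$, so Lemma~\ref{lemma:formula} applies and yields
\[
h(N,\iota)=\frac{1}{2}k^{\gcd(N,\iota)}(k-1)N\Big(\frac{N}{\gcd(N,\iota)}-1\Big),
\]
which is exactly the claimed right-hand side after substituting back $N=n+m$ and $\iota=m$. So the only substantive content is identifying the count $h(n+m,m)$ with the number of ordered pairs $(x,y)$ of lengths $(m,n)$ satisfying $\ham(xy,yx)=2$.

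For that identification I would argue as follows. Given a pair $(x,y)$ with $|x|=m$, $|y|=n$, and $\ham(xy,yx)=2$, the word $w=xy$ has length $n+m$, and since $\sigma^m(xy)=yx$ by the definition of the left-shift map $\sigma$, we have $\ham(w,\sigma^m(w))=2$, hence $w\in H(n+m,m)$. Conversely, given any $w\in H(n+m,m)$, write $w=xy$ with $|x|=m$ and $|y|=n$ (this factorization is unique since the lengths are prescribed); then $\ham(xy,yx)=\ham(w,\sigma^m(w))=2$, so $(x,y)$ is a pair of the desired form. These two maps are mutually inverse, so the correspondence between such pairs and $H(n+m,m)$ is a bijection, and therefore the number of pairs equals $h(n+m,m)$.

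Combining the bijection with the formula from Lemma~\ref{lemma:formula} instantiated at $(n+m,m)$ finishes the proof; no further estimation or case analysis is needed. There is essentially no obstacle here: the corollary is a restatement of Lemma~\ref{lemma:formula} under the dictionary ``length-$(m,n)$ pair $(x,y)$'' $\leftrightarrow$ ``word $w=xy$ of length $n+m$ with $\ham(w,\sigma^m(w))=2$''. The only point requiring a word of care is making explicit that the factorization $w=xy$ with the lengths fixed is unique, so that the correspondence is genuinely one-to-one rather than one-to-many; once that is noted, the count transfers verbatim.
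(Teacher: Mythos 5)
Your proof is correct and matches the paper's intent: the corollary is stated there without proof precisely because it is the immediate instantiation of Lemma~\ref{lemma:formula} at $(n+m,m)$, combined with the obvious bijection $(x,y)\leftrightarrow w=xy$ with $\sigma^m(xy)=yx$. Your explicit check that the fixed-length factorization makes this correspondence one-to-one is exactly the (routine) point the paper leaves implicit.
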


\section{Some useful properties}\label{section:interlude}

In this section we prove some properties of $H(n,i)$ and $H(n)$ that we use in later sections.

\begin{lemma}\label{lemma:half}
Let $u$ be a length-$n$ word. Let $i$ be an integer with $0< i < n$. If $u\in H(n,i)$ then $u\in H(n,n-i).$ 
\end{lemma}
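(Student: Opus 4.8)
The plan is to relate a left shift by $i$ to a left shift by $n-i$ via the observation that $\sigma^i$ and $\sigma^{n-i}$ are mutually inverse operations on length-$n$ words, together with the symmetry of Hamming distance. Concretely, if $u=xy$ with $|x|=i$, then $\sigma^i(u)=yx$, and setting $v=yx$ we have $\sigma^{n-i}(v)=\sigma^{|y|}(yx)=xy=u$. Since $\mathrm{ham}$ is symmetric, $\mathrm{ham}(u,\sigma^i(u))=\mathrm{ham}(xy,yx)=\mathrm{ham}(yx,xy)=\mathrm{ham}(v,\sigma^{n-i}(v))$.

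First I would dispose of the degenerate cases $i=0$ and $i=n$: here $\sigma^i(u)=u$, so $\mathrm{ham}(u,\sigma^i(u))=0\neq 2$, meaning $u\notin H(n,i)$ and the implication holds vacuously; likewise $n-i\in\{0,n\}$ in these cases. So assume $0<i<n$. Then write $u=xy$ with $|x|=i$ and $|y|=n-i$, both nonempty. The key step is the chain of equalities $\mathrm{ham}(u,\sigma^i(u))=\mathrm{ham}(xy,yx)=\mathrm{ham}(yx,xy)$, and the identification $yx=\sigma^i(u)$ with $\sigma^{n-i}(yx)=xy=u$. Hence $\mathrm{ham}(yx,\sigma^{n-i}(yx))=\mathrm{ham}(yx,xy)=2$, so $yx\in H(n,n-i)$.

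The subtlety is that $H(n,n-i)$ is defined as a set of words $u$ with $\mathrm{ham}(u,\sigma^{n-i}(u))=2$, so what the above directly shows is that $yx=\sigma^i(u)\in H(n,n-i)$, not immediately that $u$ itself is in $H(n,n-i)$. To finish, I would note that $H(n,j)$ is closed under arbitrary cyclic shifts: if $w\in H(n,j)$ then $\sigma^t(w)\in H(n,j)$ for every $t$, because $\mathrm{ham}(\sigma^t(w),\sigma^j(\sigma^t(w)))=\mathrm{ham}(\sigma^t(w),\sigma^t(\sigma^j(w)))=\mathrm{ham}(w,\sigma^j(w))$, using that $\sigma^t$ is a bijection on the set of positions that commutes with $\sigma^j$ (it simply relabels positions cyclically, preserving the multiset of mismatches). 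Applying this with $w=yx$ and $t=n-i$ gives $\sigma^{n-i}(yx)=xy=u\in H(n,n-i)$, as desired.

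The main obstacle — really the only one requiring care — is making the cyclic-shift invariance argument rigorous: one must check that left-shifting both arguments of the Hamming distance by the same amount permutes the positions of disagreement without creating or destroying any, which is immediate from $\sigma^t\sigma^j=\sigma^{t+j}=\sigma^j\sigma^t$ on length-$n$ words and injectivity of $\sigma^t$. Once that is in place, the rest is the short computation above. An alternative, perhaps cleaner, route avoids invoking cyclic invariance separately: observe directly that $\sigma^{n-i}(u)=\sigma^{n-i}(xy)$ where now we must split $u$ as a prefix of length $n-i$ and a suffix of length $i$, say $u=x'y'$ with $|x'|=n-i$; then $\sigma^{n-i}(u)=y'x'$ and $\mathrm{ham}(u,\sigma^{n-i}(u))=\mathrm{ham}(x'y',y'x')$, which one shows equals $\mathrm{ham}(xy,yx)$ because $\{xy,yx\}$ and $\{x'y',y'x'\}$ are the same pair of words up to a common cyclic shift. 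I would present the invariance lemma explicitly since it is reused, and then the proof of Lemma~\ref{lemma:half} is two lines.
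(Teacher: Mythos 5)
Your proof is correct, but it takes a different route from the paper's. The paper argues directly: for $i\leq n/2$ it writes $u=xtz$ with $|x|=|z|=i$, computes $\ham(u,\sigma^i(u))=\ham(xt,tz)+\ham(z,x)$ and $\ham(u,\sigma^{n-i}(u))=\ham(x,z)+\ham(tz,xt)$, and observes these sums consist of the same terms by symmetry of the Hamming distance (with a symmetric case for $i>n/2$). You instead observe that $\sigma^{n-i}$ inverts $\sigma^i$, so that $\sigma^i(u)=yx$ lies in $H(n,n-i)$ immediately, and then transfer membership from $\sigma^i(u)$ back to $u$ using the fact that Hamming distance is invariant under a simultaneous cyclic shift of both words — which is exactly the argument the paper deploys only later, in Lemma~\ref{lemma:anyconjugate}, to show $H(n)$ is closed under conjugation. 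You correctly identified the one real subtlety (that your direct computation puts $\sigma^i(u)$, not $u$, into $H(n,n-i)$) and closed it with the invariance lemma; your handling of the degenerate cases $i\in\{0,n\}$ is also fine (indeed the paper's statement nominally allows them although $H(n,i)$ is only defined for $1\leq i<n$). What your route buys is the elimination of the two-case split and the three-block bookkeeping, at the cost of proving the shift-invariance fact up front; since the paper needs that fact later anyway, your organization arguably streamlines the exposition, while the paper's version keeps Lemma~\ref{lemma:half} self-contained.
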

\begin{proof}
Suppose $i\leq n/2$. Then we can write $u=xtz$ for some words $t,z$ where $|x|=|z|=i$ and $|t|=n-2i$. We have that $\ham(xtz,tzx) = \ham(xt,tz) + \ham(z,x) = 2$. Consider the word $zxt$. Clearly $v=zxt$ is a conjugate of $u=xtz$ such that $\ham(xtz,zxt) = \ham(x,z) +\ham(tz,xt) = 2$ where $u = (xt)z$ and $v = z(xt)$ with $|xt| = n-i$. Therefore $u\in H(n,n-i).$

Suppose $i>n/2$. Then we can write $u=zty$ for some words $t,z$ where $|z|=|y|=n-i$ and $|t|=2i-n$. We have that $\ham(zty,yzt) = \ham(z,y) + \ham(ty,zt) = 2$. Consider the word $tyz.$ Clearly $v=tyz$ is a conjugate of $u=zty$ such that $\ham(zty,tyz) = \ham(zt,ty) +\ham(y,z) = 2$ where $u = z(ty)$ and $v = (ty)z$ with $|z| =n- i$. Therefore $u\in H(n,n-i).$
\end{proof}

\begin{lemma}
Let $u$ be a length-$n$ word. If $u\in H(n)$, then $\ham(u,v)>0$ for any non-trivial conjugate $v$ of $u$.
\end{lemma}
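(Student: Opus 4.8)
The plan is to prove the contrapositive using the Lyndon--Sch\"{u}tzenberger theorem. If $u$ has a non-trivial conjugate $v$ with $\ham(u,v)=0$, then by Theorem~\ref{theorem:commute} the word $u$ is a power, say $u = z^m$ with $m \geq 2$ and $|z| = d$, so $n = md$. It thus suffices to show that no such power lies in $H(n)$, i.e., that $\ham(u, \sigma^i(u)) \neq 2$ for every $i$ with $1 \leq i \leq n-1$.

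Given such an $i$, I would first reduce the shift modulo the period: writing $i = qd + r$ with $0 \leq r < d$ and using that $z^m$ has period $d$ (so $\sigma^d(u) = u$), we get $\sigma^i(u) = \sigma^r(u)$. If $r = 0$ this shift equals $u$ and the distance is $0$. If $0 < r < d$, I would factor $z = ab$ with $|a| = r$ and $|b| = d-r$, note that $\sigma^r((ab)^m) = (ba)^m$, and compare $(ab)^m$ with $(ba)^m$ blockwise over the $m$ aligned length-$d$ blocks to obtain
\[
\ham(u, \sigma^i(u)) = \ham\big((ab)^m,(ba)^m\big) = m\,\ham(ab, ba).
\]
Since $ab$ and $ba$ form a conjugate pair, Lemma~\ref{lemma:hammingNon1} gives $\ham(ab,ba) \neq 1$, so this quantity is $0$ or at least $2$; hence $m\,\ham(ab,ba)$ is $0$ or at least $2m \geq 4$, and in particular never $2$. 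Therefore $u \notin H(n)$, which establishes the contrapositive and the lemma.

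I do not expect a genuine obstacle here: the only content is identifying the two reductions, namely that a shift of a power reduces to a shift by the residue modulo the period, and that Hamming distance decomposes additively over the $m$ period-blocks. The hypothesis $m \geq 2$ is exactly what pushes the ``$\geq 2m$'' alternative past $2$, and Lemma~\ref{lemma:hammingNon1} (Shallit) is precisely the ingredient that forbids a contribution of $1$ from a single period.
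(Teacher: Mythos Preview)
Your proposal is correct and follows essentially the same route as the paper: prove the contrapositive via Lyndon--Sch\"{u}tzenberger to write $u=z^m$ with $m\ge 2$, observe that any conjugate is $(ba)^m$ for some factorization $z=ab$, decompose the Hamming distance blockwise as $m\,\ham(ab,ba)$, and invoke Lemma~\ref{lemma:hammingNon1} to rule out the value $2$. The only cosmetic difference is that you make the reduction $\sigma^i(u)=\sigma^{i \bmod d}(u)$ explicit, whereas the paper states directly that every conjugate of $z^m$ has the form $(ts)^m$ with $z=st$.
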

\begin{proof}
We prove the contrapositive of the lemma statement. Namely, we prove that if there exists a non-trivial conjugate $v$ of $u$ such that $\ham(u,v)=0$ then $u\not \in H(n)$. 

Suppose $u=xy$ and $v=yx$ for some non-empty words $x$, $y$. Then by Theorem~\ref{theorem:commute} we have that there exists a word $z$, and an integer $i\geq 2$ such that $u = v = z^i$. Let $w$ be a conjugate of $u$. Then $w = (ts)^i$ where $z=st$. So $\ham(u,w) = \ham((st)^i,(ts)^i) = i\ham(st,ts)$. If $st=ts$, then $\ham(u,w)=0$. If $st\neq ts$, then $\ham(st,ts)\geq 2$ (Lemma~\ref{lemma:hammingNon1}). Since $\ham(st,ts)\geq 2$ and $i\geq 2$, we have $\ham(u,w)\geq 4$. Thus $u\not\in H(n)$.
\end{proof}
\begin{corollary}\label{corollary:notH}
Let $u$ be a length-$n$ word. If $u$ is a power, then $u\not\in H(n)$.
\end{corollary}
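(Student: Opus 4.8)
The plan is to obtain this as an immediate consequence of the lemma immediately above. Suppose $u$ is a power of length $n$. Since the empty word is not a power, $u$ is non-empty, so by definition we may write $u = z^i$ for some non-empty word $z$ and some integer $i \geq 2$. The first step is to check that $v := \sigma^{|z|}(u)$ is a \emph{non-trivial} conjugate of $u$: writing $u = z\cdot z^{i-1}$, both factors are non-empty because $1 \le |z| = n/i \le n/2 < n$, and by definition $v = z^{i-1}z = z^{i} = u$.

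Next I would argue by contradiction: assume $u \in H(n)$. Then the preceding lemma applies and yields $\ham(u,v) > 0$ for \emph{every} non-trivial conjugate $v$ of $u$. But the non-trivial conjugate exhibited above satisfies $\ham(u,v) = \ham(u,u) = 0$, a contradiction. Hence $u \notin H(n)$. (Equivalently, one could bypass the preceding lemma and invoke Theorem~\ref{theorem:commute} directly, since $u = z^i$ equals its own shift by $|z|$, but routing through the preceding lemma is cleanest.)

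The argument is short and I do not anticipate any genuine obstacle; the only point needing (trivial) care is the verification that shifting by $|z|$ produces a \emph{non-trivial} conjugate, i.e.\ that $0 < |z| < n$, which is forced by $i \ge 2$ together with $i \mid n$.
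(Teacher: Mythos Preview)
Your argument is correct and is exactly the intended deduction: the paper states this as an immediate corollary of the preceding lemma, and your verification that $\sigma^{|z|}(u)=u$ is a non-trivial conjugate with Hamming distance $0$ is precisely the contrapositive instance needed.
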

\begin{corollary}\label{corollary:primitive}
All words in $H(n)$ are primitive.
\end{corollary}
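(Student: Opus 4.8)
The plan is to obtain this as an immediate consequence of Corollary~\ref{corollary:notH}. Recall that, by definition, every non-empty word is either primitive or a power. Since every element of $H(n)$ has length $n$, and $H(n)$ is only considered for $n\geq 2$ (a word possessing a non-trivial conjugate must have length at least $2$), each $u\in H(n)$ is a non-empty word and hence is either primitive or a power.

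It therefore suffices to rule out the second alternative, and this is precisely what Corollary~\ref{corollary:notH} provides. Concretely, I would argue by contradiction: suppose some $u\in H(n)$ is not primitive. Then there is a word $z$ and an integer $i\geq 2$ with $u=z^i$, so $u$ is a power; Corollary~\ref{corollary:notH} then forces $u\notin H(n)$, contradicting the choice of $u$. Hence every word in $H(n)$ is primitive.

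There is essentially no obstacle here: all of the real work has already been carried out in the preceding lemma and in Corollary~\ref{corollary:notH} (which in turn rests on Theorem~\ref{theorem:commute} and Lemma~\ref{lemma:hammingNon1}), and the present statement is merely the translation of ``not a power'' into ``primitive.'' The only point requiring the slightest care is the background dichotomy ``primitive or power,'' which should be applied to non-empty words — harmless, since members of $H(n)$ are non-empty.
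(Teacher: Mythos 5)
Your proposal is correct and matches the paper's (implicit) argument: the corollary is just the contrapositive of Corollary~\ref{corollary:notH}, combined with the power/primitive dichotomy for non-empty words. Nothing more is needed.
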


\begin{lemma}\label{lemma:anyconjugate}
Let $u$ be a length-$n$ word. Let $i$ be an integer with $0< i < n$. If $u\in H(n,i)$, then any conjugate of $u$ is also in $H(n,i)$.
\end{lemma}
\begin{proof}
Suppose $u\in H(n,i)$. Then $\ham(u,\sigma^i(u))=2$. If we shift both $u$ and $\sigma^i(u)$ by the same amount, then the symbols that are being compared to each other do not change. Thus $\ham(\sigma^j(u), \sigma^{i+j}(u)) = 2$ for all $j\geq 0$. So any conjugate $\sigma^j(u)$ of $u$ must also be in $H(n,i)$.
\end{proof}
\section{Counting $H(n)$}\label{section:finalcount}

Lemma~\ref{lemma:half} shows that $H(n,i) = H(n,n-i)$, which in turn implies that $h(n) \leq \sum_{i=1}^{\lfloor n/2\rfloor} h(n,i)$. To make this inequality an equality we need to be able to account for those words that are double-counted in the sum $\sum_{i=1}^{\lfloor n/2\rfloor} h(n,i)$. In this section we resolve this problem and give an exact formula for $h(n)$. More specifically, we show that all words $w$ that are in both $H(n,i)$ and $H(n,j)$, for $i\neq j$, must exhibit a certain regular structure that we can explicitly describe. Then we use this structure result, in addition to the results from Section~\ref{section:countH} and Section~\ref{section:interlude}, to give an exact formula for $h(n)$.


\begin{lemma}\label{lemma:twoPeriods}
Let $n,i,j$ be positive integers such that $n\geq 2i>2j$. Let $g=\gcd(n,i,j)$. Let $w$ be a length-$n$ word. Then $w\in H(n,i)$ and $w\in H(n,j)$ if and only if there exists a word $u$ of length $g$, a word $v$ of length $g$ with $\ham(u,v)=1$, and a non-negative integer $p<n/g$ such that $w=u^pvu^{n/g-p-1}$.
\end{lemma}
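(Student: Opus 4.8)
The plan is to prove both directions by reducing to the block characterization of Lemma~\ref{lemma:bijection}, applied at the common granularity $g = \gcd(n,i,j)$. Write $w = x_0 x_1 \cdots x_{n/g-1}$ with $|x_\ell| = g$ for all $\ell$. Since $g \mid i$ and $g \mid j$, the shifts $\sigma^i$ and $\sigma^j$ both permute these blocks cyclically: $\sigma^i$ sends block $\ell$ to block $(\ell + i/g) \bmod n/g$, and likewise for $j$. The key structural input is the additive subgroup of $\mathbb{Z}_{n/g}$ generated by $i/g$ and $j/g$; because $\gcd(n/g, i/g, j/g) = 1$, this subgroup is all of $\mathbb{Z}_{n/g}$.

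For the ($\Leftarrow$) direction, suppose $w = u^p v u^{n/g - p - 1}$ as described, so exactly one block (block $p$) equals $v$ and all others equal $u$, with $\ham(u,v) = 1$. Then for the shift by $i$, the summands $\ham(x_\ell, x_{(\ell + i/g) \bmod n/g})$ are nonzero only when exactly one of $\ell$, $(\ell + i/g)\bmod n/g$ equals $p$; since $n \geq 2i > 0$ forces $i/g \not\equiv 0 \pmod{n/g}$, there are exactly two such $\ell$, each contributing $\ham(u,v) = 1$, giving total Hamming distance $2$. Hence $w \in H(n,i)$, and symmetrically $w \in H(n,j)$ (here $j \neq 0$ is guaranteed since $j \geq 1$; one should double-check the edge case $j/g \equiv 0$, which cannot happen as $0 < j < i \le n/2 < n$). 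This direction is essentially a direct computation using Lemma~\ref{lemma:bijection} in reverse.

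For the ($\Rightarrow$) direction, assume $w \in H(n,i)$ and $w \in H(n,j)$. I would \emph{not} apply Lemma~\ref{lemma:bijection} directly at granularity $i$ or $j$, since the $\gcd$'s differ; instead re-derive its content at granularity $g$. From $w \in H(n,i)$: the map $\ell \mapsto (\ell + i/g) \bmod n/g$ identifies blocks, and the total Hamming distance being $2$ means that, reading around the $i/g$-cycle (a single $n/g$-cycle since $\gcd(n/g,i/g)$ may exceed $1$ — careful here!), consecutive blocks are equal except for a small number of defects. Actually the cleanest route: define an equivalence on blocks generated by "$x_\ell = x_m$ whenever forced." Let me instead argue via counting distinct block values. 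Consider the word $w$ as a coloring of $\mathbb{Z}_{n/g}$ by blocks. Being in $H(n,i)$ means the coloring changes value across exactly two of the $n/g$ "edges" $\{\ell, \ell + i/g\}$, and similarly for $j$. I would show that a coloring of $\mathbb{Z}_{n/g}$ that is nearly-invariant (two defects) under \emph{both} the $+i/g$ and $+j/g$ actions, whose combined action is transitive, must take only two distinct values $u, v$, with $v$ occurring exactly once. The argument: let $S = \{\ell : x_\ell \neq x_{\ell'}\ \text{for some forced neighbour}\}$; use that if two defect-edges for the $i$-action are $\{a, a+i/g\}$ and $\{b, b+i/g\}$, then blocks are constant on the two arcs of the $i/g$-cycle between the defects, so at most two values appear and they appear on arcs; intersecting the arc-structure from the $i$-action with that from the $j$-action, and using transitivity of the combined group action, forces one of the two values to occupy a single block. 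Then $\ham(u,v) = 1$ follows since the total Hamming distance is exactly $2$ and both defect edges (for each shift) straddle that single block, each contributing $\ham(u,v)$.

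The main obstacle is the ($\Rightarrow$) direction, specifically the combinatorial claim that simultaneous two-defect near-invariance under two cyclic shifts whose combined action is transitive forces the "one special block" structure. The subtlety is that $\gcd(n/g, i/g)$ need not be $1$, so the $i$-action alone may split $\mathbb{Z}_{n/g}$ into several cycles; Lemma~\ref{lemma:bijection}'s proof handled the coprime case, and one must take care that the two Hamming-distance-$1$ defects (or one distance-$2$ defect) can lie in different $i/g$-orbits. The role of the hypothesis $n \geq 2i > 2j$ is presumably to rule out degenerate orbit configurations and to ensure the defect edges are "genuine" (nontrivial shifts); I expect the proof to lean on it at exactly the point where one argues the two values cannot each occupy a large arc under both actions simultaneously. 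A clean way to organize this is: first handle the subcase where $w \in H(n,i)$ forces, via Lemma~\ref{lemma:bijection} at granularity $\gcd(n,i)$, a coarse structure, then refine to granularity $g$ using the additional constraint from $j$; the transitivity of $\langle i/g, j/g \rangle = \mathbb{Z}_{n/g}$ is what upgrades "two values on arcs" to "one value except at a point."
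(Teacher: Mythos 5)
Your backward direction is fine and matches the paper's (a direct block computation at granularity $g$). The problem is the forward direction: it rests entirely on the combinatorial claim that ``a coloring of $\mathbb{Z}_{n/g}$ that is nearly-invariant (two defects) under both the $+i/g$ and $+j/g$ actions, whose combined action is transitive, must take only two distinct values $u,v$, with $v$ occurring exactly once,'' and you only sketch why this should hold. In fact, as stated that claim is \emph{false}: take $m=5$ with shifts $a=3$, $b=2$ (each already generates $\mathbb{Z}_5$, so the combined action is transitive) and the coloring $w=00101$. One checks $\ham(00101,\sigma^2(00101))=\ham(00101,10100)=2$ and $\ham(00101,\sigma^3(00101))=\ham(00101,01001)=2$, so $w\in H(5,2)\cap H(5,3)$ with $\gcd(5,3,2)=1$, yet $w$ is not of the form $u^pvu^{4-p}$ (the letter $1$ occurs twice). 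This does not contradict the lemma, because there $2i=6>n=5$; it shows that the inequality $n\geq 2i>2j$ is not a mere normalization or a guard against ``degenerate orbits'' but must enter the combinatorial argument itself --- transitivity of $\langle i/g,\,j/g\rangle$ alone cannot force the single-defective-block structure. Your proposal never says how the inequality would be used, so the crux of the forward direction is missing. A secondary inaccuracy: from the $i$-action alone you assert ``at most two values appear and they appear on arcs,'' but when $\gcd(n/g,i/g)>1$ this holds only on the one $i/g$-cycle containing the two defect edges; the other $i/g$-cycles are each constant but a priori with arbitrary values, and ruling those out is part of what must be proved.

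For comparison, the paper avoids this global coloring argument altogether: it proves the forward direction by induction on $n$, writing $w=xyx'y'z$ with $|xy|=|x'y'|=i$, $|x|=|x'|=j$, splitting into the cases $\ham(xy,x'y')\in\{0,1,2\}$, and in each case passing to the shorter word $xyz\in H(n-i,i)\cap H(n-i,j)$ (using Lemma~\ref{lemma:half} to restore the hypothesis $n-i\geq 2i$ when needed, which is precisely where $n\geq 2i>2j$ earns its keep), together with Theorem~\ref{theorem:commute} and Corollary~\ref{corollary:notH}. Your group-action viewpoint could conceivably be turned into an alternative proof, but it would need a genuinely new argument that exploits the arithmetic constraint $n\geq 2i>2j$ (e.g.\ excluding configurations like the one above where both values occupy arcs of length $\geq 2$ for both shifts), and that argument is exactly what is absent.
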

\begin{proof}~\\
\noindent $\Longrightarrow:$ The proof is by induction on $|w|=n$. Suppose $w\in H(n,i)$ and $w\in H(n,j)$. First, we take care of the case when $n=2i$, which also includes the base case $n=4$, $i=2$, $j=1$. Write $w = xyx'y'$ where $|xy|=|x'y'|=i=n/2$ and $|x|=|x'|=j$. Since $w\in H(n,i)$, we have that $\ham(xyx'y',x'y'xy)=2$. This implies that $\ham(xy,x'y')=1$. Furthermore, if $\ham(xy,x'y')=1$ then either $\ham(x,x')=1$ or $\ham(y,y')=1$.

Suppose $\ham(x,x')=1$. Then $y=y'$. Since $w\in H(n,j)$, we have $\ham(xyx'y,yx'yx)=\ham(xy,yx')+\ham(x'y,yx) =2$. Suppose $\ham(xy,yx')=0$ or $\ham(x'y,yx)=0$. Both cases imply that $\ham(xy,yx)=1$, which contradicts Lemma~\ref{lemma:hammingNon1}. Thus, we must have $\ham(xy,yx')=\ham(x'y,yx)=1$. But this implies that $\ham(xy,yx)=0$ and $\ham(x'y,yx')=2$ or $\ham(xy,yx)=2$ and $\ham(x'y,yx')=0$. Without loss of generality, suppose $\ham(xy,yx)=0$. By Theorem~\ref{theorem:commute}, there exists a word $s$, and integers $l,m\geq 1$ such that $x=s^l$ and $y=s^m$. Clearly $|s|$ divides $\gcd(n/2, j) =\gcd(n,n/2,j)= \gcd(n,i,j) = g$ since it divides both $|x| = j$ and $|xy| =i= n/2$. Therefore, there exists a length-$g$ word $u$ such that $x = u^{j/g}$ and $y = u^{(i-j)/g}$. Since $x$ and $x'$ differ in exactly one position, and $x=u^{j/g}$, there exists a length-$g$ word $v$ with $\ham(u,v)=1$, and a non-negative integer $p'<j/g$ such that $x' = u^{p'}vu^{j/g-p'-1}$. Letting $p = p' + i/g = p' + (n/2)/g$, we have $w=xyx'y = u^{i/g}u^{p'}vu^{j/g-p'-1}u^{(i-j)/g} = u^pvu^{n/g-p-1}$.

Suppose $\ham(y,y')=1$. Then $x=x'$. Since $w\in H(n,j)$, we have $\ham(xyxy',yxy'x) = \ham(xy,yx) + \ham(xy',y'x)=2$. By Lemma~\ref{lemma:hammingNon1}, we have that $\ham(xy,yx)\neq 1$ and $\ham(xy',y'x)\neq 1$. So either $\ham(xy,yx)=0$ or $\ham(xy',y'x)=0$. Without loss of generality, suppose $\ham(xy,yx)=0$. As in the previous case when $\ham(x,x')=1$, there exists a length-$g$ word $u$ such that $x = u^{j/g}$ and $y = u^{(i-j)/g}$. Since $y$ and $y'$ differ in exactly one position, there exists a length-$g$ word $v$ with $\ham(u,v)=1$, and a non-negative integer $p' < (i-j)/g$ such that $y' = u^{p'} v u^{(i-j)/g - p' - 1}$. Letting $p = p' + (i+j)/g = p' + (n/2+j)/g$, we have $w = xyxy' = u^{i/g}u^{j/g}u^{p'} v u^{(i-j)/g - p' - 1} = u^p v u^{n/g-p-1}$.

Now, we take care of the case when $n > 2i$. Write $w=xyx'y'z$ for words $x,y,x',y',z$ where $|xy|=|x'y'|=i$, and $|x|=|x'|=j$. Since $w\in H(n,i)$, we have that $w$ and $\sigma^i(w)$ differ in exactly two positions $j_1 < j_2$. But $n>2i$ implies that either $j_2-j_1 > i$ or $j_2-j_1\leq i$ and $n-(j_2-j_1) > 2i - (j_2-j_1) \geq i$. In either case we have that there is a length-$i$ contiguous block, possibly occurring in the wraparound, where $w$ and $\sigma^i(w)$ match. This translates to there being a length-$2i$ block in $w$ of the form $tt$ where $|t|=i$. Additionally, we have that $\sigma^m(w)\in H(n,i)$ and $\sigma^m(w)\in H(n,j)$ for all $m\geq 0$ by Lemma~\ref{lemma:anyconjugate}. Therefore, we can assume without loss of generality that $w$ begins with this length-$2i$ block (i.e., $\ham(xy,x'y')=0$).

Suppose $\ham(xy,x'y')=0$. Then $\ham(xyxyz,xyzxy)=\ham(xyxyz,yxyzx)=2$. Clearly $\ham(xyxyz,xyzxy)=\ham(xyz,zxy)=2$, so $xyz \in H(n-i,i)$. Now, either $xy=yx$ or $xy\neq yx$. If $xy=yx$, then we clearly have $\ham(xyxyz,yxyzx) = \ham(xyz,yzx)=2$. Therefore, we have $xyz\in H(n-i,j)$. Let $g = \gcd(n-i,i,j)$. We have that $g=\gcd(n-i,i,j) = \gcd(\gcd(n-i,i),j) = \gcd(\gcd(n,i),j) =\gcd(n,i,j)$. If $n-i \geq 2i> 2j$, then we can apply induction to $xyz$ directly. By Lemma~\ref{lemma:half}, we have that if $xyz \in H(n-i,i)$ and $xyz\in H(n-i,j)$, then $xyz\in H(n-i,n-2i)$ and $xyz\in H(n-i,n-i-j)$. If $n-i < 2i$ and $n-i \geq 2j$, then $n-i > 2(n-2i)$ and $\gcd(n-i,n-2i,j) = \gcd(n,i,j)=g$. However, in this case we can have $j=n-2i$, which we have to take care of separately since it does not satisfy the inductive hypothesis. If $n-i < 2j < 2i$, then $n-i > 2(n-i-j)$,  $n-i > 2(n-2i)$, and $\gcd(n-i,n-2i,n-i-j) = \gcd(n,i,j)=g$.

Suppose $j\neq n-2i$. By induction there exists a word $u$ of length $g$, a word $v$ of length $g$ with $\ham(u,v)=1$, and a non-negative integer $p' < (n-i)/g$ such that $xyz=u^{p'}vu^{(n-i)/g-p'-1}$. Since $xy=yx$ and $g\mid \gcd(i,j)$, it is clear that $xy = u^{i/g}$. Then $w = xyxyz = u^{p'+i/g} v u^{(n-i)/g-p'-1}$. Letting $p=p'+i/g$, we have $w  = u^{p} v u^{n/g-p-1}$.

Suppose $j=n-2i$. Then $w=xyxyz$ where $|z|=|x|=n-2i$. Since $w\in H(n,n-2i)$, we have $\ham(xyxyz,yxyzx) = \ham(xy,yx)+\ham(xy,yz) +\ham(z,x)=2$. But $xy=yx$ by assumption. Thus $\ham(xy,yz)+\ham(z,x)=2$, which is only true when $\ham(z,x)=1$. By Theorem~\ref{theorem:commute}, there exists a word $s$, and integers $l,m\geq 1$ such that $x=s^l$ and $y=s^m$. Since $|s|$ divides both $|x|=j=n-2i$ and $|xy|=i$, we have $|s|$ divides $\gcd(i,j)=\gcd(i,n-2i) = \gcd(n, i, n- 2i)=\gcd(n,i,j) = g$. Therefore, there exists a length-$g$ word $u$ such that $x=u^{j/g}$ and $y=u^{(i-j)/g}$. We also have $\ham(z,x)=1$, which implies that there exists a length-$g$ word $v$ with $\ham(u,v)=1$, and a non-negative integer $p' < j/g$ such that $z = u^{p'}vu^{j/g-p'-1}$. Letting $p=p'+2i/g$, we have $w=xyxyz=u^{2i/g}u^{p'}vu^{(n-2i)/g-p'-1}=u^{p}vu^{n/g-p-1}$.

If $xy\neq yx$, then we must have $\ham(xy,yx)=2$. But since $\ham(xyxyz,yxyzx) = 2$, we must have $\ham(xyz,yzx) = 0$. This means that $xyz$ is a power, but we have already demonstrated that $xyz \in H(n-i,i)$. By Corollary~\ref{corollary:notH}, this is a contradiction.

\bigskip

\noindent $\Longleftarrow:$ Let $g=\gcd(n,i,j)$. Suppose we can write $w = u^{p}vu^{n/g-p-1}$ where $|u|=|v|=g$, and $\ham(u,v)=1$. Since $g\mid i$, we can write

\[\ham(w,\sigma^i(w)) =  \ham(u^{p}vu^{n/g-p-1}, u^{p-i/g}vu^{n/g+i/g-p-1}) = 2\ham(u,v)=2\]
if $p\leq i/g$, and
\[\ham(w,\sigma^i(w)) =  \ham(u^{p}vu^{n/g-p-1}, u^{n/g  -i+p}v u^{p-i-1}) = 2\ham(u,v)=2\]
if $p> i/g$.
Since $g$ divides $j$ as well, a similar argument works to show $\ham(w,\sigma^{j}(w))=2$ as well. Therefore, $w\in H(n,i)$ and $w\in H(n,j)$.
\end{proof}


Lemma~\ref{lemma:twoPeriods} shows that any word $w$ that is in $H(n,i)$ and $H(n,j)$ for $j<i\leq n/2$ is of Hamming distance $1$ away from a power. Therefore, to count the number of such words, we need a formula for the number of powers.

 Clearly a word is a power if and only if it is not primitive. This implies that $p_k(n)=k^n-\psi_k(n)$ where $\psi_k(n)$ is the number of length-$n$ primitive words over a $k$-letter alphabet. From Lothaire's 1983 book~\cite[p.~9]{Lothaire:1983} we also have that 

 \[\psi_k(n) = \sum_{d\mid n}\mu(d)k^{n/d}\] where $\mu$ is the M\"{o}bius function.

Let $H'(n,i)$ denote the set of words $w\in H(n,i)$ that are also in $H(n,j)$ for some $j<i$. Let $h'(n,i) = |H'(n,i)|$.

\begin{corollary}
Let $n,i$ be positive integers such that $n\geq 2i$. Then
\[h'(n,i) = \begin{cases} 
      n(k-1)p_k(i), & \text{if $i\mid n$;} \\
      n(k-1)k^{\gcd(n,i)}, & \text{otherwise.} 
   \end{cases}\]
\end{corollary}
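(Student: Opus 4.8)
The plan is to count, for fixed $n$ and $i$ with $n\ge 2i$, the words $w\in H(n,i)$ that also lie in $H(n,j)$ for some $j<i$, by invoking the structure theorem (Lemma~\ref{lemma:twoPeriods}). That lemma tells us exactly which words can lie in two different shift-classes: a word $w$ lies in $H(n,i)\cap H(n,j)$ for some $j<i\le n/2$ if and only if, setting $g=\gcd(n,i,j)$, we can write $w=u^{p}vu^{n/g-p-1}$ with $|u|=|v|=g$, $\ham(u,v)=1$, and $0\le p<n/g$. So $H'(n,i)$ is the union, over all $j<i$ with $n\ge 2j$, of the sets of words of that form. The main task is to describe this union cleanly and then count it without double-counting.

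First I would reduce the "$u^pvu^{n/g-p-1}$ with $\ham(u,v)=1$" description to something alphabet-theoretic: such a word is exactly a word that is Hamming distance $1$ from the perfect power $u^{n/g}$, where $|u|=g$. So the set of words associated to a fixed divisor $g$ is the set of length-$n$ words at Hamming distance exactly $1$ from some $g$-th-block power (a word of the form $u^{n/g}$). The key combinatorial observation is then: ranging $j$ over $1\le j<i$ with $n\ge 2j$, which values $g=\gcd(n,i,j)$ arise? Since $g\mid\gcd(n,i)$ always, and conversely for any divisor $g$ of $\gcd(n,i)$ one can choose $j=g$ (which is $<i$ when $g<i$, i.e. when $i\nmid n$ fails... careful here) — I would sort out the two cases. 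If $i\mid n$ then $\gcd(n,i)=i$, and the relevant $g$'s range over divisors of $i$ that are $<i$, so the union is over all proper-divisor powers, i.e. all words distance $1$ from a \emph{proper} power with period a divisor of $i$; these are exactly the words distance $1$ from a power $z^m$, $m\ge2$, $|z|\mid i$ — and the count of such powers is $p_k(i)$ by the formula $p_k(i)=k^i-\psi_k(i)$ recalled just before the corollary. If $i\nmid n$, then $j=i$ is not forced to be excluded in the same way: any divisor $g$ of $\gcd(n,i)$ (including $g=\gcd(n,i)$ itself, via $j=\gcd(n,i)<i$) is attainable, and the union of "distance $1$ from a $g$-block power" over all $g\mid\gcd(n,i)$ collapses to "distance $1$ from a $\gcd(n,i)$-block power" since every such power is subsumed; there are $k^{\gcd(n,i)}$ choices of the block $u$ of length $\gcd(n,i)$.

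Having identified the set, I would count: to build a word at Hamming distance exactly $1$ from a fixed power $z^m$ (of period $\ell=|z|$), pick one of $n$ positions to corrupt and one of $k-1$ replacement letters, giving $n(k-1)$ words per power. I then need the powers themselves to be counted without overlap: distinct powers $z_1^{m_1}\ne z_2^{m_2}$ give disjoint distance-$1$ neighbourhoods as long as no word is distance $1$ from two different powers — this holds because two distinct powers of length $n$ differ in at least $2$ positions (if they agreed on all but one position, subtracting gives a contradiction with primitivity/periodicity, essentially Lemma~\ref{lemma:hammingNon1} applied blockwise, or directly: a length-$n$ word has at most one closest power). So the total is $n(k-1)$ times the number of relevant powers, which is $p_k(i)$ in the case $i\mid n$ and $k^{\gcd(n,i)}$ in the case $i\nmid n$, yielding the stated formula.

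The main obstacle I anticipate is the bookkeeping in the $i\nmid n$ case: verifying that every divisor $g$ of $\gcd(n,i)$ (not just $g=1$ or $g=\gcd(n,i)$) really is realized as $\gcd(n,i,j)$ for some admissible $j<i$, \emph{and} that the nested union of distance-$1$ neighbourhoods of $g$-block powers over all such $g$ is exactly the distance-$1$ neighbourhood of the $\gcd(n,i)$-block powers — i.e. that finer-period powers contribute nothing new. This requires checking that $u^{n/g}$ for a divisor $g$ of $G:=\gcd(n,i)$ is itself a $G$-block power (true, since $g\mid G\mid n$ means $u^{G/g}$ is a length-$G$ block and $u^{n/g}=(u^{G/g})^{n/G}$), so its neighbourhood is already counted; combined with the non-overlap argument for the $k^G$ distinct length-$G$ blocks, this closes the case. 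The disjointness claim — that a length-$n$ word is distance $1$ from at most one power — is the other point that needs a clean argument, but it follows directly from Lemma~\ref{lemma:hammingNon1}-type reasoning since two powers at distance $\le 1$ would have to coincide.
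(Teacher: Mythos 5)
Your overall route is the one the paper intends (the paper states this corollary without proof): use Lemma~\ref{lemma:twoPeriods} to identify $H'(n,i)$ with the set of words at Hamming distance exactly $1$ from a blockwise power, sort the attainable values of $g=\gcd(n,i,j)$ into the two cases $i\mid n$ and $i\nmid n$, and count $n(k-1)$ neighbours per power. The case analysis of which $g$ arise, and the collapse of the union to ``distance $1$ from a $\gcd(n,i)$-block power'' when $i\nmid n$ (respectively, from $z^{n/i}$ with $z$ a non-primitive length-$i$ word when $i\mid n$, so that the number of centers is $p_k(i)$ --- note your phrase ``$|z|\mid i$'' must mean a \emph{proper} divisor, since primitive $z$ of length $i$ must be excluded), is correct.

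The genuine gap is in your disjointness step. You assert that no word is at distance $1$ from two different powers, and justify it by saying that two distinct powers differ in at least $2$ positions, equivalently that ``two powers at distance $\le 1$ would have to coincide.'' That inference is invalid: if $w$ is at distance $1$ from both $P_1$ and $P_2$, the triangle inequality only gives $\ham(P_1,P_2)\le 2$, and two centers at distance exactly $2$ do have intersecting radius-$1$ balls. Indeed the general claim you invoke is false: $0001$ is at distance $1$ from both of the powers $0000$ and $(01)^2=0101$. What saves the count is that your centers all share a common block length. When $i\nmid n$ the centers are $u_1^{n/G}\neq u_2^{n/G}$ with $G=\gcd(n,i)<i\le n/2$, so they differ in at least $n/G\ge 3$ positions and the balls are disjoint; the same holds when $i\mid n$ and $n/i\ge 3$. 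The only delicate case is $n=2i$ with $i\mid n$: there the centers are $z_1^2\neq z_2^2$ with $z_1,z_2$ non-primitive of length $i$, and disjointness requires $\ham(z_1,z_2)\ge 2$, i.e.\ that two \emph{distinct non-primitive words of the same length} cannot be at Hamming distance $1$. This is true but is not a consequence of Lemma~\ref{lemma:hammingNon1} in the way you suggest; it needs its own short argument (for instance: if $z_1=x^a$ and $z_2=y^b$ with $a,b\ge 2$ agreed except at one position $m$, then comparing the letters forced at positions congruent to $m$ modulo $|x|$ and modulo $|y|$ --- the relevant ``compatibility'' graph is a complete bipartite graph $K_{a,b}$ minus one edge, which is still connected since $a,b\ge 2$ --- forces $z_1[m]=z_2[m]$, a contradiction). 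Without this fix, your count of $n(k-1)$ words per center could in principle double-count, so the proof as written does not go through in the boundary case $n=2i$.
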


Let $H''(n,i)$ denote the set of words $w\in H(n,i)$ such that $w\not\in H(n,j)$ for all $j<i$. Let $h''(n,i) = |H''(n,i)|$.

\begin{lemma}\label{lemma:shortest}
Let $n,i$ be positive integers such that $n >i$. Then
\[h''(n,i) = \begin{cases} 
      \frac{1}{2}n(k-1)\big(k^{\gcd(n,i)}\big(\frac{n}{\gcd(n,i)}-1\big)-2p_k(i)\big), & \text{if $i\mid n$;} \\
      \frac{1}{2}k^{\gcd(n,i)}(k-1)n\big(\frac{n}{\gcd(n,i)}-3\big), & \text{otherwise.} 
   \end{cases}\]
\end{lemma}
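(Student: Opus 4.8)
The plan is to count $H''(n,i)$ by an inclusion-exclusion-style bookkeeping: every word of $H(n,i)$ is either ``new'' (counted in $h''(n,i)$) or was already in some $H(n,j)$ with $j<i$ (counted in $h'(n,i)$). Since these two possibilities are exhaustive and mutually exclusive, $h''(n,i) = h(n,i) - h'(n,i)$. I would then simply substitute the closed form $h(n,i) = \frac{1}{2}k^{\gcd(n,i)}(k-1)n\big(\frac{n}{\gcd(n,i)}-1\big)$ from Lemma~\ref{lemma:formula} and the closed form for $h'(n,i)$ from the preceding corollary, and split into the two cases $i\mid n$ and $i\nmid n$ exactly as that corollary does.

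In the case $i\nmid n$, we have $h'(n,i) = n(k-1)k^{\gcd(n,i)}$, so
\[
h''(n,i) = \tfrac{1}{2}k^{\gcd(n,i)}(k-1)n\Big(\tfrac{n}{\gcd(n,i)}-1\Big) - n(k-1)k^{\gcd(n,i)} = \tfrac{1}{2}k^{\gcd(n,i)}(k-1)n\Big(\tfrac{n}{\gcd(n,i)}-3\Big),
\]
which is the claimed second branch. In the case $i\mid n$, we have $\gcd(n,i)=i$ and $h'(n,i) = n(k-1)p_k(i)$, so
\[
h''(n,i) = \tfrac{1}{2}k^{i}(k-1)n\Big(\tfrac{n}{i}-1\Big) - n(k-1)p_k(i) = \tfrac{1}{2}n(k-1)\Big(k^{i}\big(\tfrac{n}{i}-1\big) - 2p_k(i)\Big),
\]
matching the first branch. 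Both reductions are one-line algebraic rearrangements, so there is essentially no computation to grind through.

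The one genuine subtlety to address is the validity of the identity $h''(n,i) = h(n,i) - h'(n,i)$ itself, i.e. that the decomposition of $H(n,i)$ into $H'(n,i)$ and $H''(n,i)$ really is a partition. This follows immediately from the definitions: $H'(n,i)$ consists of the words of $H(n,i)$ that lie in some $H(n,j)$ with $j<i$, and $H''(n,i)$ consists of those that lie in no such $H(n,j)$, so $H(n,i)=H'(n,i)\sqcup H''(n,i)$. The only place this could go wrong is if the hypothesis $n>i$ (rather than $n\ge 2i$, as required for the corollary giving $h'(n,i)$) created a gap; but when $n/2 < i < n$ we have by Lemma~\ref{lemma:half} that $H(n,i)=H(n,n-i)$ with $n-i<i$, so such a word automatically lies in $H(n,n-i)$, giving $h'(n,i)=h(n,i)$ and $h''(n,i)=0$ — and one checks the stated formula also yields $0$ here since $\gcd(n,i)=\gcd(n,n-i)$ divides $n-i<i$, hmm, actually this boundary check is the part I would be most careful about and is the main (though minor) obstacle: verifying that the two displayed formulae genuinely vanish when $n/2<i<n$, so that restricting attention to $n\ge 2i$ in the corollary loses nothing. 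I expect this to reduce to noting $\frac{n}{\gcd(n,i)} = \frac{n}{\gcd(n,n-i)} \le \frac{n}{1}$ is not small enough in general, so instead I would handle $n/2 < i < n$ as a separate explicit sub-case, invoke Lemma~\ref{lemma:half} to get $h''(n,i)=0$, and separately confirm the formula gives $0$ there, before doing the clean subtraction above in the range $n \ge 2i$.
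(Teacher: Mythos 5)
Your core argument is exactly the paper's: the paper's proof consists precisely of the observation that $H(n,i)$ is the disjoint union of $H'(n,i)$ and $H''(n,i)$, so $h''(n,i)=h(n,i)-h'(n,i)$, followed by substituting Lemma~\ref{lemma:formula} and the corollary giving $h'(n,i)$. Your two algebraic reductions are correct and agree with the paper's displayed formula.

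The boundary issue you flagged, however, is real, and your proposed fix cannot be carried out. For $n/2<i<n$ you are right that $h''(n,i)=0$ (by Lemma~\ref{lemma:half}, $H(n,i)=H(n,n-i)$ and $n-i<i$), but the displayed formula does \emph{not} vanish there: for $n=5$, $i=3$ we have $\gcd(5,3)=1$, $i\nmid n$, and the second branch gives $\tfrac{1}{2}k(k-1)\cdot 5\cdot(5-3)=5k(k-1)\neq 0$, whereas $h''(5,3)=0$. So the step ``separately confirm the formula gives $0$ there'' is impossible; the formula (and hence the lemma as stated with only $n>i$) fails in that range. The correct resolution is to restrict the hypothesis to $n\geq 2i$ (equivalently $i\leq n/2$), which is exactly the range in which the corollary for $h'(n,i)$ applies and the only range used later in the identity $h(n)=\sum_{i=1}^{\lfloor n/2\rfloor}h''(n,i)$; the paper's own proof makes this restriction silently. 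With that adjustment, your subtraction argument is complete and identical to the paper's.
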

\begin{proof}
Let $w$ be a length-$n$ word. The word $w$ is in $H''(n,i)$ precisely if it is in $H(n,i)$ but not in any $H(n,j)$ for $j<i$. So computing $h''(n,i)$ reduces to computing the number of length-$n$ words that are in $H(n,i)$ and $H(n,j)$ for some $j<i$ (i.e., $h'(n,i)$) and then subtracting it from the number of words in $H(n,i)$ (i.e., $h(n,i)$). Therefore  
\[h''(n,i) =h(n,i)-h'(n,i)= \begin{cases} 
      \frac{1}{2}n(k-1)\big(k^{\gcd(n,i)}\big(\frac{n}{\gcd(n,i)}-1\big)-2p_k(i)\big), & \text{if $i\mid n$;} \\
      \frac{1}{2}k^{\gcd(n,i)}(k-1)n\big(\frac{n}{\gcd(n,i)}-3\big), & \text{otherwise.} 
   \end{cases}\]
\end{proof}

\begin{theorem}
Let $n$ be an integer $\geq 2$. Then \[h(n) = \sum_{i=1}^{\lfloor n/2\rfloor}h''(n,i).\]
\end{theorem}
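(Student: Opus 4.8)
The plan is to show that $\{H''(n,i)\}_{i=1}^{\lfloor n/2 \rfloor}$ is a partition of $H(n)$, so that $h(n) = \sum_{i=1}^{\lfloor n/2\rfloor} h''(n,i)$ follows by summing cardinalities. There are three things to verify: every word in $H(n)$ lands in some $H''(n,i)$ with $i \le \lfloor n/2\rfloor$; the sets $H''(n,i)$ are pairwise disjoint; and each $H''(n,i)$ is contained in $H(n)$.

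First I would unwind the definitions. A word $u \in H(n)$ has \emph{some} conjugate $v$ with $\ham(u,v)=2$, i.e.\ $u \in H(n,i)$ for some $i$ with $1 \le i \le n-1$ (the cases $i=0$ and $i=n$ give $v=u$, hence $\ham = 0 \ne 2$, so they are excluded automatically). By Lemma~\ref{lemma:half}, $H(n,i) = H(n,n-i)$, so the set $S_u = \{\, i : 1 \le i \le n-1,\ u \in H(n,i)\,\}$ is symmetric under $i \mapsto n-i$; in particular it contains an element $\le \lfloor n/2 \rfloor$. Let $i_0 = i_0(u)$ be the \emph{smallest} element of $S_u$; then $i_0 \le \lfloor n/2\rfloor$, and by minimality $u \notin H(n,j)$ for every $j < i_0$, so $u \in H''(n,i_0)$. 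This gives $H(n) \subseteq \bigcup_{i=1}^{\lfloor n/2\rfloor} H''(n,i)$. Conversely $H''(n,i) \subseteq H(n,i) \subseteq H(n)$ directly from the definition of $H''$, giving the reverse inclusion. Disjointness is immediate: if $u \in H''(n,i) \cap H''(n,j)$ with $i < j$, then $u \in H(n,i)$ with $i < j$ contradicts $u \in H''(n,j)$ (which forbids membership in $H(n,\ell)$ for $\ell < j$). Hence the union is disjoint, and
\[
h(n) = \left| \bigcup_{i=1}^{\lfloor n/2 \rfloor} H''(n,i) \right| = \sum_{i=1}^{\lfloor n/2 \rfloor} |H''(n,i)| = \sum_{i=1}^{\lfloor n/2 \rfloor} h''(n,i).
\]

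The one genuinely load-bearing input is Lemma~\ref{lemma:half}, which guarantees that the smallest shift witnessing $u \in H(n)$ can be taken $\le \lfloor n/2 \rfloor$; without it the sum would only range up to $n-1$ and would overcount. Everything else is bookkeeping. I expect no real obstacle here — the main point to state carefully is that $i=0$ is not a valid conjugacy shift for the Hamming-distance-$2$ condition (so that $H(n,i)$ is only defined/nonempty for $1 \le i \le n-1$, matching the standing hypothesis $n > i \ge 1$ in the earlier lemmas), after which the ``smallest valid shift'' argument closes everything cleanly.
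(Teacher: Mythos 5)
Your proof is correct and follows essentially the same route as the paper: use Lemma~\ref{lemma:half} to restrict attention to shifts $i\le\lfloor n/2\rfloor$ and then decompose $H(n)$ according to the smallest such shift, which is exactly what $H''(n,i)$ captures. Your write-up just makes the paper's ``avoid double-counting'' argument explicit as a partition of $H(n)$.
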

\begin{proof}
Every word that is in $H(n)$ must also be in $H(n,i)$ for some integer $i$ in the range $1\leq i \leq n-1$. By Lemma~\ref{lemma:half} we have that every word that is in $H(n,i)$ is also in $H(n,n-i)$. Therefore we only need to consider words in $H(n,i)$ where $i$ is an integer with $i\leq n-i \implies i \leq n/2.$ Consider the quantity $S= \sum\limits_{i=1}^{\lfloor n/2 \rfloor} h(n,i).$ Since any member of $H(n)$ must also be a member of $H(n,i)$ for some $i\leq \lfloor n/2\rfloor$, we have that $h(n) \leq S.$ But any member of $H(n,i)$ may also be a member of $H(n,j)$ for some $j<i.$ These words are accounted for multiple times in the sum $S$. To avoid double-counting we must count the number of words $w$ that are in $H(n,i)$ but not in $H(n,j)$ for any $j<i$. This quantity is exactly $h''(n,i)$. Therefore \[h(n) = \sum_{i=1}^{\lfloor n/2\rfloor} h''(n,i).\] \end{proof}

\section{Exactly one conjugate}\label{section:exactly}
So far we have been interested in length-$n$ words $u$ that have at least one conjugate of Hamming distance $2$ away from $u$. But what about length-$n$ words $u$ that have exactly one conjugate of Hamming distance $2$ away from $u$? In this section we provide a formula for the number $h'''(n)$ of length-$n$ words $u$ with exactly one conjugate $v$ such that $\ham(u,v)=2$. 

Let $n$ and $i$ be positive integers such that $n > i$. Let $H'''(n)$ denote the set of length-$n$ words $u$ over $\Sigma_k$ that have exactly one conjugate $v$ with $\ham(u,v)=2$. Let $h'''(n) = |H'''(n)|$. Let $H''(n,i)$ denote the set of length-$n$ words $w$ such that $w$ is in $H(n,i)$ but is not in $H(n,j)$ for any $j\neq i.$ Let $h''(n,i) = |H''(n,i)|$.

 Suppose $w\in H'''(n,i)$. Then by definition we have that $w\in H(n,i)$ and $w\not\in H(n,j)$ for any $j\neq i$. But by Lemma~\ref{lemma:half} we have that if $w$ is in $H(n,i)$ then it must also be in $H(n,n-i)$. So if $i \neq n-i$, then $w$ has at least two distinct conjugates of Hamming distance $2$ away from it, namely $\sigma^i(w)$ and $\sigma^{n-i}(w)$. Therefore we have $i=n-i$. This implies that $n$ must be even, so $H'''(2m+1)=\{\}$ for all $m\geq 1.$ Since $i=n-i\implies i=n/2$, we have that $w\in H(n,n/2)$. However $w$ cannot be in $H(n,j)$ for any $j\neq n/2$. Since any word in $H(n,j)$ is also in $H(n,n-j)$, the condition of $w\not \in H(n,j)$ for any $j\neq n/2$ is equivalent to $w\not\in H(n,j)$ for any $j$ with $1\leq j < n/2.$ But this is just the definition of $H''(n,n/2)$. From this we get the following theorem.
\begin{theorem}
Let $n\geq 1$ be an integer. Then \[h'''(n)= \begin{cases} 
      \frac{1}{2}n(k-1)(k^{n/2}-2p_k(n/2)), & \text{if $n$ is even;} \\
     0, & \text{otherwise.} 
   \end{cases}\]
\end{theorem}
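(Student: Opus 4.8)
The plan is to reduce the count $h'''(n)$ to the already-computed quantity $h''(n,n/2)$ from Lemma~\ref{lemma:shortest}; the bulk of the argument is exactly the chain of implications sketched in the paragraph preceding the theorem, which I would simply organize carefully and then combine with a substitution.

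First I would settle the parity. Take $w \in H'''(n)$, so $w \in H(n,i)$ for a unique shift amount $i$ with $1 \le i \le n-1$, meaning $\sigma^i(w)$ is the only conjugate with $\ham(w,\sigma^i(w)) = 2$. By Corollary~\ref{corollary:primitive}, $w$ is primitive, so its $n$ conjugates $\sigma^0(w),\ldots,\sigma^{n-1}(w)$ are pairwise distinct; in particular $\sigma^i(w) \ne \sigma^{n-i}(w)$ whenever $i \ne n-i$. But Lemma~\ref{lemma:half} gives $w \in H(n,n-i)$ as well, so uniqueness forces $i = n-i$, i.e.\ $n$ is even and $i = n/2$. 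Hence $H'''(n) = \emptyset$ when $n$ is odd, which yields the second case of the formula.

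Next, for $n$ even, I would identify $H'''(n)$ with $H''(n,n/2)$. By the previous paragraph every $w \in H'''(n)$ lies in $H(n,n/2)$ and in no other $H(n,j)$ with $j \ne n/2$; conversely, if $w \in H(n,n/2)$ and $w \notin H(n,j)$ for every $j$ with $1 \le j < n/2$, then, since $H(n,j) = H(n,n-j)$ by Lemma~\ref{lemma:half}, $w$ also lies in no $H(n,j)$ with $n/2 < j \le n-1$, so $\sigma^{n/2}(w)$ is its only conjugate at Hamming distance $2$. This is precisely the defining condition of $H''(n,n/2)$, so $h'''(n) = h''(n,n/2)$.

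Finally I would evaluate $h''(n,n/2)$ using Lemma~\ref{lemma:shortest}. Since $n/2 \mid n$ we are in the first case, and $\gcd(n,n/2) = n/2$, so $\tfrac{n}{\gcd(n,n/2)} = 2$ and the bracketed term collapses to $k^{n/2} - 2p_k(n/2)$, giving $h'''(n) = \tfrac12 n(k-1)\bigl(k^{n/2} - 2p_k(n/2)\bigr)$, as claimed. I do not anticipate a genuine obstacle: the only point needing care is the appeal to primitivity of the members of $H(n)$ to guarantee that $\sigma^i(w)$ and $\sigma^{n-i}(w)$ really are two distinct conjugates when $i \ne n-i$; the rest is bookkeeping and the substitution into Lemma~\ref{lemma:shortest}.
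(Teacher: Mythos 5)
Your proposal is correct and follows essentially the same route as the paper: show $H'''(n)$ is empty for odd $n$ and equals $H''(n,n/2)$ for even $n$ using Lemma~\ref{lemma:half}, then substitute $i=n/2$ into Lemma~\ref{lemma:shortest}. Your explicit appeal to Corollary~\ref{corollary:primitive} to guarantee $\sigma^i(w)\neq\sigma^{n-i}(w)$ when $i\neq n-i$ is a small point the paper leaves implicit, but it is the same argument.
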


\section{Lyndon conjugates}\label{section:lyndon}
A \emph{Lyndon word} is a word that is lexicographically smaller than any of its non-trivial conjugates. In this section we count the number of Lyndon words in $H(n)$. 

\begin{theorem}\label{theorem:lyndon}
There are $\frac{h(n)}{n}$ Lyndon words in $H(n)$.
\end{theorem}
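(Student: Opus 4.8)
The plan is to show that $H(n)$ is a disjoint union of \emph{conjugacy classes}, each of which has exactly $n$ elements, and that each such class contains exactly one Lyndon word; the count $h(n)/n$ then follows immediately. The main tools are Lemma~\ref{lemma:anyconjugate}, which says $H(n)$ is closed under conjugation, and Corollary~\ref{corollary:primitive}, which says every word in $H(n)$ is primitive.

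First I would recall the standard fact from combinatorics on words that a primitive word of length $n$ has exactly $n$ distinct conjugates, i.e.\ the maps $\sigma^0, \sigma^1, \ldots, \sigma^{n-1}$ applied to it yield $n$ pairwise distinct words. (If two shifts $\sigma^a(u)$ and $\sigma^b(u)$ with $0 \le a < b < n$ coincided, then $u$ would have period $b-a$, forcing $u$ to be a power, contradicting primitivity.) Next, I would observe that ``being a conjugate of'' is an equivalence relation on words of length $n$, so $\Sigma_k^n$ is partitioned into conjugacy classes. By Lemma~\ref{lemma:anyconjugate}, $H(n)$ is a union of entire conjugacy classes: if $u \in H(n)$ then every conjugate of $u$ lies in $H(n)$, so the whole class of $u$ is contained in $H(n)$. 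By Corollary~\ref{corollary:primitive}, every $u \in H(n)$ is primitive, so each of these classes has size exactly $n$. Hence $H(n)$ is a disjoint union of classes of size $n$, and therefore $h(n) = |H(n)|$ is divisible by $n$, with $h(n)/n$ classes in total.

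Finally, I would invoke the classical fact that every conjugacy class of primitive words contains exactly one Lyndon word — namely the lexicographically least element of the class, which is primitive because all its conjugates are, and which is by definition strictly smaller than all its other conjugates (these are all distinct from it by primitivity). Since a Lyndon word in $H(n)$ is precisely a Lyndon word lying in one of these $h(n)/n$ classes, and each class contributes exactly one, the number of Lyndon words in $H(n)$ equals the number of classes, which is $h(n)/n$.

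The only real subtlety, rather than obstacle, is making sure the ``exactly one Lyndon word per class'' and ``$n$ distinct conjugates'' facts are cited cleanly (both are standard, e.g.\ from Lothaire, which the paper already references); everything else is bookkeeping about the equivalence relation. I would state these two facts explicitly at the start of the proof, possibly as a one-line reminder with a citation, and then the argument is essentially three short sentences combining them with Lemma~\ref{lemma:anyconjugate} and Corollary~\ref{corollary:primitive}.
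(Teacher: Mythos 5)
Your proposal is correct and follows essentially the same route as the paper: both use Lemma~\ref{lemma:anyconjugate} and Corollary~\ref{corollary:primitive} to see that $H(n)$ is a union of conjugacy classes of primitive words, each class contributing exactly one Lyndon word, giving the count $h(n)/n$. You merely spell out the intermediate facts (each class has size $n$, exactly one Lyndon conjugate per primitive word) that the paper states without proof.
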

\begin{proof}
Corollary~\ref{corollary:primitive} says that all members of $H(n)$ are primitive and Lemma~\ref{lemma:anyconjugate} says that if a word is in $H(n)$, then any conjugate of it is also in $H(n)$. It is easy to verify that every primitive word has exactly one Lyndon conjugate. Therefore exactly $\frac{h(n)}{n}$ words in $H(n)$ are Lyndon words.
\end{proof}
\section{Asymptotic behaviour of $h(n)$}\label{section:asymptotics}
In this section we show that $h(n)$ grows erratically. We do this by demonstrating that $h(n)$ is a cubic polynomial for prime $n$, and that $h(n)$ is bounded below by an exponential for even $n$. 
\begin{lemma}
Let $n$ be a prime number. Then \[h(n)=\frac{1}{4}k(k-1)n(n^2-4n+7).\]
\end{lemma}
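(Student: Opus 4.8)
The plan is to evaluate the formula $h(n)=\sum_{i=1}^{\lfloor n/2\rfloor}h''(n,i)$ from the previous theorem, using the fact that $n$ is prime to drastically simplify each term. First I would observe that when $n$ is prime, the only index $i$ in the range $1\le i\le\lfloor n/2\rfloor$ that divides $n$ is... none of them, since $n$ prime and $i<n$ forces $i\nmid n$ unless $i=1$, and $1\mid n$ always. So I must separate the case $i=1$ (where $1\mid n$) from the cases $2\le i\le\lfloor n/2\rfloor$ (where $i\nmid n$ and, crucially, $\gcd(n,i)=1$).

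Next I would substitute into Lemma~\ref{lemma:shortest}. For $i=1$: $i\mid n$, $\gcd(n,i)=1$, and $p_k(1)=0$ (there are no length-$1$ powers), so the first case of the formula gives
\[h''(n,1)=\tfrac12 n(k-1)\bigl(k^1(n-1)-0\bigr)=\tfrac12 n(k-1)k(n-1).\]
For $2\le i\le\lfloor n/2\rfloor$: $i\nmid n$ and $\gcd(n,i)=1$, so the second case gives
\[h''(n,i)=\tfrac12 k(k-1)n(n-3),\]
which is independent of $i$. There are $\lfloor n/2\rfloor-1=\tfrac{n-1}{2}-1=\tfrac{n-3}{2}$ such values of $i$ (using that $n$ is an odd prime, so $\lfloor n/2\rfloor=(n-1)/2$; the case $n=2$ should be checked separately or noted to satisfy the formula trivially). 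Then
\[h(n)=\tfrac12 nk(k-1)(n-1)+\tfrac{n-3}{2}\cdot\tfrac12 k(k-1)n(n-3)=\tfrac14 k(k-1)n\bigl(2(n-1)+(n-3)^2\bigr),\]
and expanding $2(n-1)+(n-3)^2=2n-2+n^2-6n+9=n^2-4n+7$ yields the claimed $h(n)=\tfrac14 k(k-1)n(n^2-4n+7)$.

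The computation is entirely routine; the only thing requiring a moment's care is the edge behaviour. The main (minor) obstacle is handling $n=2$, where $\lfloor n/2\rfloor=1$ so the sum has only the $i=1$ term and the ``$\tfrac{n-3}{2}$ copies'' count is negative; one checks directly that $h(2)=h''(2,1)=\tfrac12\cdot 2(k-1)k(2-1)=k(k-1)$, which equals $\tfrac14 k(k-1)\cdot 2\cdot(4-8+7)=\tfrac14 k(k-1)\cdot 2\cdot 3=\tfrac32 k(k-1)$ — so in fact $n=2$ does \emph{not} satisfy the formula, and I would either restrict to odd primes or note this exception. For odd primes the derivation above is complete, so I would state the lemma for odd $n$ (or simply present the arithmetic, since the intended reading is clearly odd primes) and the proof is just the substitution and expansion sketched above.
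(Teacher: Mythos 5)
Your proof is correct and follows essentially the same route as the paper: split the sum $\sum_{i=1}^{\lfloor n/2\rfloor}h''(n,i)$ into the $i=1$ term (where $1\mid n$ and $p_k(1)=0$) and the $(n-3)/2$ terms with $\gcd(n,i)=1$ and $i\nmid n$, then expand. Your observation about $n=2$ is also well taken: the paper's proof tacitly assumes an odd prime (its sum runs to $(n-1)/2$), and indeed $h(2)=k(k-1)$ while the stated formula gives $\tfrac{3}{2}k(k-1)$, so restricting to odd primes as you suggest is the right fix.
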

\begin{proof}
Let $n>1$ be a prime number. Since $n$ is prime, we have that $\gcd(n,i)=1$ for all integers $i$ with $1<i<n$. Then
\begin{align} h(n) &= \sum_{i=1}^{(n-1)/2}h''(n,i) \nonumber\\
                   &=\frac{1}{2}k(k-1)n(n-1)+\sum_{i=2}^{(n-1)/2}\frac{1}{2}k^{\gcd(n,i)}(k-1)n\bigg(\frac{n}{\gcd(n,i)}-3\bigg)\nonumber \\
                   &= \frac{1}{2}k(k-1)n(n-1)+\bigg(\frac{n-3}{2}\bigg)\frac{1}{2}k(k-1)n(n-3) \nonumber \\
                   &= \frac{1}{4}k(k-1)n(n^2-4n+7).\nonumber
\end{align}
\end{proof}
\begin{lemma}
Let $n> 1$ be an integer. Then $h(2n) \geq nk^{n}$.
\end{lemma}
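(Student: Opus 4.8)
The plan is to bound $h(2n)$ from below by restricting attention to a single conveniently chosen shift, namely $i=n$, and counting words in $H(2n,n)$ that are counted only once, i.e., words in $H''(2n,n)$. Since $h(2n)=\sum_{i=1}^{n}h''(2n,i)\geq h''(2n,n)$, it suffices to estimate $h''(2n,n)$ from below. Here $\gcd(2n,n)=n$ and $n\mid 2n$, so the first case of Lemma~\ref{lemma:shortest} applies, giving
\[h''(2n,n)=\tfrac{1}{2}\,(2n)(k-1)\bigl(k^{n}(2-1)-2p_k(n)\bigr)=n(k-1)\bigl(k^{n}-2p_k(n)\bigr).\]
So the whole statement reduces to the inequality $n(k-1)\bigl(k^n-2p_k(n)\bigr)\geq \tfrac12 n k^n$, i.e., to showing $k^n-2p_k(n)$ is not too small — equivalently, that $p_k(n)$, the number of length-$n$ powers over a $k$-letter alphabet, is at most roughly $k^n/2$.

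The key step, and the only real content, is therefore bounding $p_k(n)$. I would use $p_k(n)=k^n-\psi_k(n)$ together with the crude-but-sufficient estimate $\psi_k(n)\geq k^n-k^{n/2+1}$ (or a similar bound): a length-$n$ power $z^i$ with $i\geq 2$ has period at most $n/2$, and the number of words of length $n$ with period at most $n/2$ is at most $\sum_{d\le n/2} k^d \le k^{\lfloor n/2\rfloor+1}$, so $p_k(n)\leq k^{\lfloor n/2\rfloor +1}$. Hence $k^n-2p_k(n)\geq k^n-2k^{\lfloor n/2\rfloor+1}$. Plugging in,
\[h(2n)\geq n(k-1)\bigl(k^n-2k^{\lfloor n/2\rfloor+1}\bigr),\]
and since $k\geq 2$ we have $k-1\geq k/2$, so it remains to check $\tfrac{k}{2}\bigl(k^n-2k^{\lfloor n/2\rfloor+1}\bigr)\geq \tfrac12 k^n$, i.e. $k^{n+1}-2k^{\lfloor n/2\rfloor+2}\geq k^n$, i.e. $k^n(k-1)\geq 2k^{\lfloor n/2\rfloor+2}$. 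For $n\geq 5$ this is immediate since $k-1\ge 1$ and $n>\lfloor n/2\rfloor+2$; the cases $n=2,3,4$ can be handled by a direct check (or by sharpening the bound on $p_k(n)$, which for small $n$ is easy to compute exactly).

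The main obstacle is purely bookkeeping: making the small-$n$ base cases and the $k=2$ edge cases go through with whatever crude bound on $p_k(n)$ one picks. There is genuine slack in the estimate (the true $h(2n)$ grows like $\Theta(n^2 k^n)$ once the other shifts are included), so the inequality $h(2n)\geq \tfrac12 n k^n$ has plenty of room; the only care needed is to choose the $p_k(n)$ bound loose enough to state cleanly yet tight enough that the final arithmetic closes for all $n>1$ and all $k\geq 2$. I expect the cleanest writeup simply asserts $p_k(n)\le k^{\lceil n/2\rceil}$ type bound, verifies the handful of small cases numerically, and is done.
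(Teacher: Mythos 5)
Your reduction has a concrete failure case. You reduce the claim to $h(2n)\geq h''(2n,n)=n(k-1)\bigl(k^{n}-2p_k(n)\bigr)\geq \tfrac12 nk^{n}$, but for $n=2$, $k=2$ the middle quantity is exactly $0$: $h''(4,2)=2(k-1)\bigl(k^{2}-2p_k(2)\bigr)=2(k-1)(k^{2}-2k)=0$ when $k=2$, while the target is $\tfrac12\cdot 2\cdot k^{2}=4$. So the statement ``it suffices to estimate $h''(2n,n)$'' is false there, and no sharpening of the bound on $p_k(n)$ can rescue it; the promised ``direct check'' for $n=2$ would have to abandon your reduction and compute $h(4)$ (or use a different shift) by other means, which is precisely the missing content. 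Secondary, but worth noting: with your crude bound $p_k(n)\leq k^{\lfloor n/2\rfloor+1}$ the inequality also fails at $k=2$ for $n=3,4$ (it gives $0$ on the left), so those cases genuinely require exact values of $p_k(n)$, and even then $n=3,4,5,6$ at $k=2$ only close with equality --- the slack you invoke is not actually there along this route.

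The detour through $h''$ is what creates all of this trouble, and it is unnecessary. A lower bound needs only one shift and no exclusion of double-counted words: $H(2n,n)\subseteq H(2n)$ as sets of words, so $h(2n)\geq h(2n,n)$ directly, and Lemma~\ref{lemma:formula} gives $h(2n,n)=\tfrac12 k^{n}(k-1)(2n)\bigl(\tfrac{2n}{n}-1\bigr)\cdot\tfrac12=\tfrac12 k^{n}(k-1)n\geq \tfrac12 nk^{n}$ since $k\geq 2$. This is the paper's proof; it involves no bound on $p_k(n)$, no case analysis on $n$ or $k$, and no base cases. Subtracting the $2p_k(n)$ term (i.e., insisting on words whose \emph{shortest} witnessing shift is $n$) only weakens the bound, and at $(n,k)=(2,2)$ it weakens it past the point of usefulness.
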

\begin{proof}
Since any word in $H(2n,n)$ must also be in $H(2n)$, we have that $h(2n) \geq h(2n,n)$. From Lemma~\ref{lemma:formula} we see that $h(2n,n) = \frac{1}{2}k^{\gcd(2n,n)}(k-1)2n\big(\frac{2n}{\gcd(2n,n)}-1\big) =k^n (k-1)n.$ Since $k\geq 2$, we have that $k-1\geq 1$. Therefore $h(2n) \geq k^n (k-1)n \geq  nk^n$ for all $n>1$.
\end{proof}

\bibliographystyle{unsrt}
\bibliography{abbrevs,main}

\newcommand{\noopsort}[1]{} \newcommand{\singleletter}[1]{#1}
\begin{thebibliography}{10}

\bibitem{Hamming:1950}
R.~W. {Hamming}.
\newblock Error detecting and error correcting codes.
\newblock {\em The Bell System Technical Journal}, 29(2):147--160, 1950.

\bibitem{Lyndon&Schutzenberger:1962}
R.~C. Lyndon and M.~P. Sch{\"u}tzenberger.
\newblock The equation $ a^ {M}= b^ {N}c^{P} $ in a free group.
\newblock {\em Michigan Mathematical Journal}, 9(4):289--298, 1962.

\bibitem{Fine&Wilf:1965}
Nathan~J. Fine and Herbert~S. Wilf.
\newblock Uniqueness theorems for periodic functions.
\newblock {\em Proceedings of the American Mathematical Society},
  16(1):109--114, 1965.

\bibitem{deLuca&Mignosi:1994}
Aldo {de Luca} and Filippo Mignosi.
\newblock Some combinatorial properties of {S}turmian words.
\newblock {\em Theoretical Computer Science}, 136(2):361--385, 1994.

\bibitem{Shallit:2009}
Jeffrey Shallit.
\newblock Hamming distance for conjugates.
\newblock {\em Discrete Mathematics}, 309(12):4197--4199, 2009.

\bibitem{Sanda&Sergey:2012}
Sandi Klav\v{z}ar and Sergey Shpectorov.
\newblock Asymptotic number of isometric generalized {F}ibonacci cubes.
\newblock {\em European Journal of Combinatorics}, 33(2):220–226, February
  2012.

\bibitem{Wei:2017}
Jianxin Wei.
\newblock The structures of bad words.
\newblock {\em European Journal of Combinatorics}, 59:204--214, 2017.

\bibitem{Wei&Yang&Zhu:2019}
Jianxin Wei, Yujun Yang, and Xuena Zhu.
\newblock A characterization of non-isometric binary words.
\newblock {\em European Journal of Combinatorics}, 78:121--133, 2019.

\bibitem{Beal&Crochemore:2021}
Marie-Pierre B\'{e}al and Maxime Crochemore.
\newblock Checking whether a word is {H}amming-isometric in linear time, July
  2021.
\newblock arXiv:2106.10541.

\bibitem{Shallit:2015}
Jeffrey Shallit.
\newblock Fifty {Y}ears of {F}ine and {W}ilf.
\newblock Talk presented at the {V}rije {U}niversiteit, {A}msterdam, November
  2015.

\bibitem{Lothaire:1983}
M.~Lothaire.
\newblock {\em Combinatorics on Words}.
\newblock Addison-Wesley, Reading, Mass., 1983.

\end{thebibliography}

\end{document}